\newtheorem*{rep@theorem}{\rep@title}
\newcommand{\newreptheorem}[2]{%
\newenvironment{rep#1}[1]{%
 \def\rep@title{#2 \ref{##1}}%
 \begin{rep@theorem}}%
 {\end{rep@theorem}}}
\newtheorem*{theorem*}{Theorem}
\newtheorem*{rep@definition}{\rep@title}
\newcommand{\newrepdefinition}[2]{%
\newenvironment{rep#1}[1]{%
 \def\rep@title{#2 \ref{##1}}%
 \begin{rep@definition}}%
 {\end{rep@definition}}}
\newtheorem*{rep@proposition}{\rep@title}
\newcommand{\newrepproposition}[2]{%
\newenvironment{rep#1}[1]{%
 \def\rep@title{#2 \ref{##1}}%
 \begin{rep@proposition}}%
 {\end{rep@proposition}}}
\newtheorem*{rep@question}{\rep@title}
\newcommand{\newrepquestion}[2]{%
\newenvironment{rep#1}[1]{%
 \def\rep@title{#2 \ref{##1}}%
 \begin{rep@question}}%
 {\end{rep@question}}}
\newtheorem{question}[thm]{Question}
\DeclareRobustCommand{\qedify}[1]{%
  \ifmmode \quad\hbox{#1}
  \else
    \leavevmode\unskip\penalty9999 \hbox{}\nobreak\hfill
    \quad\hbox{#1}%
  \fi
}
\newenvironment{example}{\begin{example*}\pushQED{\qedify{$\diamondsuit$}}}{\popQED\end{example*}}
\DeclarePairedDelimiter\floor{\lfloor}{\rfloor}
\DeclareMathOperator*{\rank}{r_M}
\DeclareMathOperator*{\nulll}{n_M}
\DeclareMathOperator*{\Aut}{Aut}
\DeclareMathOperator*{\ranki}{r_{M_i}}
\DeclareMathOperator*{\rankiii}{r_{M_{i+1}}}
\DeclareMathOperator*{\rankMn}{r_{M_n}}
\DeclareMathOperator*{\rankMr}{r_{M_r}}
\DeclareMathOperator*{\rankMz}{r_{M_0}}
\DeclareMathOperator*{\ch}{ch}
\newcommand{\MW}{\mathrm{MW}}
\title{The rank-nullity ring of a matroid}
\author{Tara Fife}
\thanks{\href{mailto:fi.tara@gmail.com}{\texttt{fi.tara@gmail.com}}}
\author{Eline Mannino}
\thanks{\href{mailto:elinmann@math.uio.no}{\texttt{elinmann@math.uio.no}}}
\author{Felipe Rinc\'on}
\thanks{\href{mailto:f.rincon@qmul.ac.uk}{\texttt{f.rincon@qmul.ac.uk}}}
\date{}
\begin{document}

\begin{abstract}
    We introduce the rank-nullity ring of a matroid $M$, which is a subring of the Chow ring of the permutahedral toric variety. This subring contains the tautological Chern classes of $M$, a fact we deduce from a highly symmetric formula for these classes. When the matroid $M$ is a uniform matroid, the rank-nullity ring coincides with the subring of $S_n$-invariants of the Chow ring of the permutahedral toric variety. In this case, we compute its Hilbert function explicitly and provide a Gr\"obner basis for the ideal of relations among its generators. 
\end{abstract}

\maketitle

\section{Introduction}
A matroid is a combinatorial abstraction of the concept of independence that arises in various areas of mathematics, including linear algebra and graph theory. Several geometric models of matroids have been explored in the literature, among them the matroid polytope, the Bergman fan, and the conormal fan; see \cite{ardila2022geometry} for a survey. 

The framework of \textit{tautological classes of matroids} introduced in \cite{berget2023tautological} provides a unified perspective on these seemingly distinct models, allowing the authors to recover and generalize numerous known results, prove conjectures about their connections, and establish several new findings.

More specifically, given a matroid $M$, the authors of \cite{berget2023tautological} define the \textit{tautological sub $K$-class} $[S_M]$ and the \textit{tautological quotient $K$-class} $[Q_M]$ as certain torus-equivariant classes on the permutohedral toric variety, generalizing the classes of the universal subbundle and quotient bundle associated with a linear subspace (see \cite[Definition 3.9]{berget2023tautological}).  
Their Chern roots $c_i(S_M), c_i(Q_M)$, called the {\em tautological Chern classes} of $M$, lie in the Chow ring $A^*(U_{n,n})$ of the permutohedral toric variety, which has a standard presentation by generators and relations:
$$A^*(U_{n,n})=\frac{\mathbb{Z}[x_S : \emptyset \subsetneq S \subseteq [n]]}{\langle x_S x_T : S \nsubseteq T \text{ and } T \nsupseteq S\rangle+\langle\sum_{S\ni e} x_S : e\in [n] \rangle }.$$ 
In \cite[Remark III.1]{berget2023tautological}, the authors provide a combinatorial formula for the generating functions of $c_i(S_M)$ and $c_i(Q_M)$, which we adapt in this work to establish the following explicit description. 

\begin{reptheorem}{mainformula} Let $M$ be a matroid of rank $r$ on the set $[n] := \{1,\dots,n\}$, with rank function $\rank:2^{[n]}\to \ZZ_{\geq 0}$. For any $0\leq k \leq r$, the Chern class $c_k(S_M)$ of the tautological subbundle $S_M$ is given by 
$$c_k(S_M)=\sum \alpha_S(S_1, \dots, S_t, p_1, \dots, p_t) \, x_{S_1}^{p_1}x_{S_2}^{p_2}\cdots x_{S_t}^{p_t} \quad \in A^k(U_{n,n}),
$$
where the sum is taken over all chains of subsets $\emptyset \subsetneq S_1 \subsetneq \cdots \subsetneq S_t \subseteq E$ and all positive integers $p_1, \dots,p_t$ satisfying $\sum_{i=1}^t p_i = k$, with coefficients given by
$$\alpha_S(S_1, \dots, S_t, p_1, \dots, p_t) :=
(-1)^k\binom{\rank(S_1)}{p_1}
\binom{\rank(S_2)-\Tilde{p}_1}{p_2}\dots
\binom{\rank(S_t)-\Tilde{p}_{t-1}}{p_t}$$
where $\Tilde{p}_i=\sum_{j=1}^ip_j$.

A similar formula holds for the Chern classes $c_k(Q_M)$ of the tautological quotient bundle $Q_M$; see \cref{mainformula} in \cref{subsec:formula}. 
\end{reptheorem}

In \cref{corcsm}, we specialize the formula above to obtain an explicit description of the Poincaré duals $\ch_k(M)$ of the Chern-Schwartz-MacPherson cycles of the matroid $M$ (see \cite{de2020chern}) in the Chow ring $A^*(M)$ of $M$. The second author has also used an earlier version of \cref{corcsm} to prove results regarding the distribution and emergence of \textit{Chern numbers of matroids} of rank $3$, which are the numbers obtained by intersecting various CSM-cycles of a matroid (see \cite{mannino2023chern}).

Motivated by a preliminary version of this paper in which our formula for the CSM classes $\ch_k(M)$ was still conjectural, an independent proof of it was also given in \cite[Theorem 3.1 and Theorem 4.2]{caraballo2024staircase}. In that work, the authors also provide a geometric argument in the case the matroid $M$ is realizable over $\CC$. 

The symmetry of the formula in \cref{mainformula} implies that the tautological Chern classes $c_i(S_M), c_i(Q_M)$ of a matroid live in a particular subring $R^*(M)$ of $A^*(U_{n,n})$, which we call the \textit{rank-nullity ring} of the matroid $M$.
\begin{repdefinition}{def:ranknullity}
Let $M$ be a matroid on the set $[n]$ with rank function $\rank$ and nullity function $\nulll$ (given by $\nulll(S) := |S| - \rank(S)$). 
For $0\leq i,j \leq n$, let
$$y_{i,j} := \sum_{\substack{\emptyset \neq S \subseteq [n] \\ \rank(S)=i,\ \nulll(S)=j}}x_S \quad \in A^*(U_{n,n}).$$

The \emph{rank-nullity ring} of $M$ is the subring
$$R^*(M) := \mathbb{Z}[y_{i,j} \,:\, 0 \leq i, j \leq n] \quad \subseteq A^*(U_{n,n}).$$
\end{repdefinition}
Our main motivation for studying the rank-nullity ring is that, as we show in \cref{thmy}, it contains all the tautological Chern classes: for any $k$ we have
$$c_k(S_M), c_k(Q_M) \in R^*(M).$$
The rank-nullity ring $R^*(M)$ is in general much smaller than the Chow ring $A^*(U_{n,n})$; we quantify in \cref{prop:rel} exactly how many linearly independent generators this subring has. 

In \cref{propaut} we note that the rank-nullity ring $R^*(M)$ is invariant under any automorphism of the matroid $M$; in other words, we have 
$$R^*(M) \subseteq A^*(U_{n,n})^{\Aut(M)},$$
where $\Aut(M)$ denotes the automorphism group of $M$. 
The action of $\Aut(M)$ on the Chow ring $A^*(M)$ of $M$ was recently studied in \cite{angarone2025chow}, where the authors lift the Poincaré duality and Hard Lefschetz properties to the equivariant setting.

The rank-nullity ring of the uniform matroid $U_{r,n}$ of rank $r$ on $n$ elements is of particular interest. 

In this case, we show that the rank-nullity ring agrees with the subring of $S_n$-invariants of the permutohedral Chow ring $A^*(U_{n,n})$. We also compute a $\ZZ$-basis and the Hilbert function of this subring.
\begin{theorem*}[\cref{prop: RNU}, \cref{prop:basisRNU}, and \cref{prop:HfunctionU}]
For any positive integers $r \leq n$, the rank-nullity ring of the uniform matroids satisfies
    $$R^*(U_{r,n})= A^*(U_{n,n})^{S_n}.$$
It is a free $\ZZ$-module with basis 
$$\mathcal{B}_n=\{z_{s_1}^{p_1}z_{s_2}^{p_2}\cdots z_{s_l}^{p_l} : 1\leq s_1 < s_2 < \dots < s_l \leq n \text{ and } 1\leq p_i < s_i-s_{i-1}\},$$
where $z_j := \sum_{|S| = j}x_S$ and by convention $s_0 = 0$.
Its Hilbert function is given by 
\begin{align*}
{\rm rank}_\ZZ(R^d(U_{r,n})) = \binom{n-1}{d}.
\end{align*}
\end{theorem*}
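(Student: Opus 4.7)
The plan is to exploit a standard monomial basis of $A^*(U_{n,n})$ together with the transparent $S_n$-orbit structure it carries. For $M = U_{r,n}$, both $\rank(S)$ and $\nulll(S)$ depend only on $|S|$, so each generator $y_{i,j}$ either vanishes or equals $z_{i+j}$. This gives $R^*(U_{r,n}) = \ZZ[z_1, \ldots, z_n]$, and since each $z_j$ is manifestly $S_n$-invariant the inclusion $R^*(U_{r,n}) \subseteq A^*(U_{n,n})^{S_n}$ is immediate.

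For the reverse inclusion and the basis statement, I would invoke the Feichtner--Yuzvinsky basis of $A^*(U_{n,n})$,
$$\{x_{S_1}^{p_1} \cdots x_{S_l}^{p_l} \,:\, \emptyset \subsetneq S_1 \subsetneq \cdots \subsetneq S_l \subseteq [n],\ 1 \leq p_i < s_i - s_{i-1}\},$$
where $s_i := |S_i|$ and $s_0 := 0$. The group $S_n$ permutes this basis, and since it acts transitively on chains of subsets with any fixed sequence of sizes, the orbits are indexed by tuples $(s_1, p_1, \ldots, s_l, p_l)$ with $1 \leq s_1 < \cdots < s_l \leq n$ and $1 \leq p_i < s_i - s_{i-1}$. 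The key identity is
$$z_{s_1}^{p_1} \cdots z_{s_l}^{p_l} \;=\; \sum_{\substack{T_1 \subsetneq \cdots \subsetneq T_l \\ |T_i| = s_i}} x_{T_1}^{p_1} \cdots x_{T_l}^{p_l},$$
obtained by expanding the left-hand side and noting that any term in which two $T_i$'s are incomparable vanishes by the quadratic relations, while once $s_1 < \cdots < s_l$ the surviving tuples are exactly the strict chains. Each product $z_{s_1}^{p_1} \cdots z_{s_l}^{p_l}$ is therefore precisely the $S_n$-orbit sum of the corresponding basis monomial. Any $S_n$-invariant element, expanded in the Feichtner--Yuzvinsky basis, has coefficients constant along orbits, so it is a $\ZZ$-combination of such products. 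This yields $A^*(U_{n,n})^{S_n} \subseteq R^*(U_{r,n})$ and shows that $\mathcal{B}_n$ spans $A^*(U_{n,n})^{S_n}$; $\ZZ$-linear independence of $\mathcal{B}_n$ is automatic, since distinct elements are sums over disjoint collections of basis monomials.

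For the Hilbert function, I would give a bijection between $\mathcal{B}_n^d$ and $d$-subsets of $[n-1]$. Given $(s_i, p_i) \in \mathcal{B}_n^d$, set $t_i := s_i - p_i$; the conditions $1 \leq p_i < s_i - s_{i-1}$ become $s_{i-1} < t_i < s_i$, so
$$D \;:=\; \bigsqcup_{i=1}^l \{t_i, t_i + 1, \ldots, s_i - 1\} \;\subseteq\; [n-1]$$
has cardinality $\sum p_i = d$. Conversely, every $d$-subset of $[n-1]$ decomposes uniquely into maximal runs of consecutive integers, from which $(s_i, p_i)$ is recovered. The main technical point is the orbit-sum identity in the middle step: it hinges on the interplay between the quadratic relations in $A^*(U_{n,n})$ and the strict ordering $s_1 < \cdots < s_l$ forced by the $\mathcal{B}_n$ indexing; the rest is bookkeeping.
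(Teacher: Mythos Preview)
Your proposal is correct and follows essentially the same approach as the paper: the Feichtner--Yuzvinsky basis, its $S_n$-orbit structure, the identity expressing $z_{s_1}^{p_1}\cdots z_{s_l}^{p_l}$ as a sum over chains with prescribed sizes, and the bijection with $d$-subsets of $[n-1]$ via maximal runs of consecutive integers all appear in the paper's proofs of the three propositions. The only organizational difference is that you fold the proofs of $R^*(U_{r,n})=A^*(U_{n,n})^{S_n}$ and the basis statement into a single orbit-sum argument, whereas the paper treats them separately (first using arbitrary chain monomials, then the FY basis); your packaging is slightly tighter but the content is the same.
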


We also investigate the ideal of relations among the generators $z_j$ of the rank-nullity ring $R^*(U_{r,n})= A^*(U_{n,n})^{S_n}$, and provide a Gr\"obner basis for it. 
\begin{reptheorem}{thm:grobnerbasis}
 The rank-nullity ring $R^*(U_{r,n})$ of the uniform matroids is the quotient of the free polynomial ring $\ZZ[Z_1, \dots, Z_n]$ by the ideal $\mathcal I$ generated by the polynomials
    \begin{equation*}
        Z_a \cdot \left( \sum_{i = b}^n \binom{i-a-1}{b-a-1} (Z_i + Z_{i+1} + \dots + Z_n )^{b-a} \right) \qquad \text{for $0\leq a < b \leq n$}
    \end{equation*}
    (with the convention that $Z_0 = 1$), where the class $[Z_i]$ in this quotient is equal to the element $z_i \in R^*(U_{r,n})$. 
    In fact, the polynomials above are a Gr\"obner basis for $\mathcal I$.
\end{reptheorem}

Finally, in the last section of the paper we further explore the behavior of the Hilbert functions of general rank-nullity rings (see \cref{tableH} for examples). 
In \cref{prop:Hincresing} we prove that for any matroid $M$ on the ground set $[n]$, the Hilbert function of $R^*(M)$ satisfies
    $${\rm rank}_\ZZ ({R^0(M)}) \leq {\rm rank}_\ZZ ({R^1(M)}) \leq \dots \leq {\rm rank}_\ZZ ({R^{\floor{\frac{n}{2}}}(M)}).$$
We do not know whether these Hilbert functions always satisfy stronger properties. 
\begin{repquestion}{qn:1}
Is the Hilbert function of the rank-nullity ring of a matroid $M$ always unimodal? Is it log-concave?
\end{repquestion}

\subsection*{Acknowledgements}
The authors would like to thank Franquiz Caraballo Alba, Johannes Rau, and Kris Shaw for helpful conversations and suggestions.

\section{Background and notation}

In this section we briefly recall all the background necessary for the paper. Readers already familiar with these topics might want to just skim through it or jump directly to Section \ref{sec:rank-nullity}.

\subsection{The Chow ring of a matroid}
Feichtner and Yuzvinsky \cite{feichtner2004chow} defined a graded commutative ring associated to an arbitrary matroid, as we recall below. The definition is motivated by the fact that, when a matroid arises from a complex hyperplane arrangement, the ring is isomorphic to the Chow ring of de Concini and Procesi \textit{wonderful compactification} of the complement of the arrangement \cite{de1995wonderful}.

\begin{defn}
\label[defn]{def:chow}
    Let $M$ be a matroid on the ground set $E=[n]=\{1,\dots,n\}$. The \textit{Chow ring} of $M$ is the quotient 
    \begin{equation}\label{eq:ChowringM}
    A^*(M)=\frac{\mathbb{Z}[x_F : F \text{ is a nonempty flat of }M]}{\langle x_F x_{F'} : F \nsubseteq F' \text{ and } F \nsupseteq F'\rangle+\langle\sum_{F\ni e} x_F : e\in E \rangle }.
\end{equation}
\end{defn}

Adiprasito, Huh, and Katz \cite{adiprasito2018hodge} proved that the Chow ring of any matroid satisfies several nice properties that we now recall. 

{\em Poincaré duality} states that, if the matroid $M$ has rank $r$, there is a group isomorphism 
$$A^{r-1}(M) \cong \ZZ$$
and moreover, for any $0 \leq k\leq r-1$, the multiplication map 
$$A^k(M)\times A^{r-1-k}(M) \longrightarrow A^{r-1}(M) \cong \ZZ$$
induces an isomorphism
$$A^{r-1-k}(M)\cong \mathrm{Hom}_\mathbb{Z}(A^k(M), \mathbb{Z}).$$
As a consequence, the \textit{Hilbert function} of the ring $A^*(M)$, which returns the dimension of its various graded components, is symmetric.

The {\em Hard Lefschetz} property states that there is an element $l\in A^1(M)$ such that, for every nonnegative integer $q \leq (r-1)/2$, multiplication by $l$ defines an isomorphism of vector spaces
\begin{align*}
L^q_l : A^q(M)_{\mathbb{R}} &\xlongrightarrow{\simeq} A^{r-1-q}(M)_{\mathbb{R}}\\
a \quad &\longmapsto \quad l^{r-1-2q} \cdot a.
\end{align*} 
This implies the weaker Lefschetz property: multiplication by the element $l \in A^1(M)$ from $A^q(M)_{\mathbb{R}}$ to $A^{q+1}(M)_{\mathbb{R}}$ is injective when $q \leq (r-1)/2$ and surjective when $q \geq (r-1)/2$.
It follows that the Hilbert function of the Chow ring $A^*(M)$ is unimodal, with peak around $q = (r-1)/2$. In fact, the Hilbert function of $A^*(M)$ is log-concave, due to the Chow ring also satisfying the Hodge-Riemann relations \cite{adiprasito2018hodge}. We will not explain this in detail here.

\subsection{The Bergman fan and Minkowski weights}
Ardila and Klivans \cite{ardila2006bergman} assigned to any matroid $M$ a rational balanced polyhedral fan $\Sigma_M$, called the Bergman fan of $M$, having a cone for each chain of flats. 

We denote by $e_i$, with $i\in E$, the standard basis vectors of $\mathbb{Z}^E$, and by $N$ the quotient lattice $\mathbb{Z}^E/\mathbb{Z}\mathbf{1}$, where the vector $\mathbf{1}=\sum_{i\in E} e_i\in \mathbb{Z}^E$.
We also write $u_i$ for the image of $e_i$ in $N$, and $u_S = \sum_{i\in S} u_i$ for any subset $S\subseteq E$.

\begin{defn}
\label[defn]{def: Bergman}
Let $M$ be a loopless matroid of rank $r$ on the set $E$. The {\em Bergman fan} $\Sigma_M$ of $M$ is the pure $(r-1)$-dimensional polyhedral fan in $N_\mathbb{R}:=N \otimes \mathbb{R}$ consisting of the cones 
$$\sigma_{\mathcal{F}}:=\mathrm{cone}(u_{F_1},u_{F_2},\dots, u_{F_k}) \quad \subseteq N_{\mathbb{R}}$$
for each chain of flats $\mathcal{F}:\emptyset\subsetneq F_1\subsetneq \cdots \subsetneq F_k \subsetneq E$ in $M$.
\end{defn}
The Bergman fan $\Sigma_{U_{n,n}}$ of the free matroid $U_{n,n}$ on $n$ elements is a complete fan in $N_\RR$, called the {\em permutahedral fan}.

The group $\MW_{\ast}(\Sigma_M)$ of {\em Minkowski weights} supported on the Bergman fan $\Sigma_M$ consists of weight functions $\omega$ from the set of cones of $\Sigma_M$ to $\ZZ$ that satisfy the balancing condition  
--- see, for instance, \cite[Section 5.1]{adiprasito2018hodge} for a precise definition. There is a duality between Minkowski weights and elements of the Chow ring, as we now explain following the exposition in \cite[Section 5.2]{adiprasito2018hodge}. For a chain of flats $\emptyset\subsetneq F_1\subsetneq \cdots \subsetneq F_k \subsetneq E$ in $M$ corresponding to the cone $\sigma \in \Sigma_M$, let $x_\sigma := x_{F_1}x_{F_2}\cdots x_{F_k}\in A^k(M)$. The elements $x_\sigma$, with $\sigma$ a $k$-dimensional cone of $\Sigma_M$, generate the group $A^k(M)$.
The map 
\begin{align}
     t_{\Sigma_M}: \MW_k(\Sigma_M) & \xlongrightarrow{\simeq} \mathrm{Hom}(A^k(\Sigma_M), \mathbb{Z}) \label{eq:linear}\\
    \omega &\longmapsto t_{\Sigma_M}\omega, \qquad \text{where } t_{\Sigma_M}\omega\, (x_\sigma) := \omega(\sigma) \nonumber
\end{align}
is an isomorphism of groups, which is an analogue of the Kronecker duality map in algebraic topology. 
This isomorphism gives rise to the {\em cap product}: 
\begin{align}
    A^{l}(\Sigma_M)\times \MW_k(\Sigma_M) &\longrightarrow \MW_{k-l}(\Sigma_M) \label{eq:cap}\\
    (\xi, \omega) &\longmapsto \xi\cap \omega, \qquad \text{where } \xi\cap \omega \,(\sigma):=t_{\Sigma_M}\omega\,(\xi\cdot  x_\sigma). \nonumber
\end{align}
Let $\omega_{M}$ be the {\em Bergman class} of $M$, i.e., the top-dimensional Minkowski weight in $\MW_{r-1}(\Sigma_M)$ where every maximal cone has weight $1$.
As noted in \cite[Corollary~4.2.4]{backman2023simplicial}, we have an isomorphism
\begin{align}
    \delta_{\Sigma_{M}}: A^*(M)&\xlongrightarrow{\simeq} \MW_{r-1-*}(\Sigma_{M}) \label{eq:isomorphism}\\ 
    \xi &\longmapsto \xi\cap \omega_{M}, \nonumber
\end{align}
which is equal to the composition of the isomorphism in \cref{eq:linear} with Poincaré duality.

Finally, let $\Sigma_{U_{n,n}}$ be the permutahedral fan, i.e., the Bergman fan of the free matroid $U_{n,n}$. As noted in the discussion right after \cite[Proposition 2.1.7]{backman2023simplicial},
the inclusion of fans $\iota: \Sigma_M \hookrightarrow \Sigma_{U_{n,n}}$ induces the surjective pullback map
\begin{align}
    \iota^*:A^*(U_{n,n}) &\longrightarrow A^*(M) \label{eq:pullback}\\
    x_S &\longmapsto 
    \begin{cases}
        x_S &\text{if } S\subseteq E \text{ is a flat of } M,\nonumber\\
        0 &\text{otherwise}\nonumber. 
    \end{cases}\nonumber
\end{align}
Any Minkowski weight $\omega \in \MW_k(\Sigma_M)$ can also be seen as a Minkowski weight $\iota_*(\omega) \in \MW_k(\Sigma_{U_{n,n}})$. If $\xi \in A^l({U_{n,n}})$ and $\omega \in \MW_{k}(\Sigma_M)$, we then have 
$$\xi \cap \iota_*(\omega) = \iota_*(\iota^*(\xi) \cap \omega).$$

\subsection{Tautological classes of matroids} 
\label{subsec:taut}
In the paper \cite{berget2023tautological}, Berget, Eur, Spink, and Tseng introduced the tautological classes of a matroid. This powerful framework allowed for the unification of various results from several separate geometric models for matroids. 

Let $E=[n]$, and consider the $(n-1)$-dimensional permutahedral toric variety $X_E$, which is the projective toric variety associated to the permutahedral fan $\Sigma_{U_{n,n}}$. 
Denote by $K_0^T(X_E)$ be the $T$-equivariant $K$-ring of $X_E$.
The Chow ring of $X_E$ is the Chow ring $A^*(U_{n,n})$ of the free matroid $U_{n,n}$. 
Suppose $M$ is a matroid on the set $E$. In the case $M$ is realizable by a linear subspace $L \subseteq \mathbb{C}^E$, the $T$-equivariant $K$-classes $[S_L],[Q_L]\in K_0^T(X_E)$ of the tautological subbundle $S_L$ and quotient bundle $Q_L$ on $X_E$ (see \cite[Definition 1.2]{berget2023tautological}) depend only on the matroid $M$. This leads to a combinatorial definition of the \textit{tautological $K$-classes} of any matroid $M$, which are elements $[S_M],[Q_M] \in K_0^T(X_E)$ (see \cite[Definition 3.9]{berget2023tautological}). The \textit{tautological Chern classes} $c_i(S_M), c_i(Q_M)\in A^i(U_{n,n})$ of a matroid $M$ are the non-equivariant Chern classes of $[S_M],[Q_M]$.

A formula for the Chern roots of $[S_M]$ and $[Q_M]$ is provided in \cite[Remark III.1]{berget2023tautological}, as we now explain.
Fix a sequence of matroids $M_0, M_1, \dots, M_{n}$ on the set $[n]$ such that $M_i$ has rank $i$, the matroid $M_r=M$, and $M_i$ is a quotient of $M_j$ for every $i<j$. Such a sequence has the property that $M_0 = U_{0,n}$ and $M_n=U_{n,n}$. Denote by $\ranki$ the rank function of $M_i$. 
Then, we have the following expressions for the Chern polynomials of $[S_M]$ and $[Q_M]$:
        \begin{align} 
            \label{ChernEquS}
    \sum_{k=0}^{r} c_k(S_M)t^k&= \prod_{i=0}^{r-1}\Biggl(1-t\sum_{\emptyset\subsetneq S\subseteq E}\bigl(\rankiii(S)-\ranki(S) \bigr)x_S  \Biggr) \quad \in A^*(U_{n,n})[t],\\
            \label{ChernEquQ}
    \sum_{k=0}^{n-r} c_k(Q_M)t^k&= \prod_{i=r}^{n-1}\Biggl(1-t\sum_{\emptyset\subsetneq S\subseteq E}\bigl(\rankiii(S)-\ranki(S) \bigr)x_S  \Biggr) \quad \in A^*(U_{n,n})[t].
        \end{align}

\subsection{Chern-Schwartz-MacPherson (CSM) cycles of matroids.} López de {Me\-dra\-no}, Rincón, and Shaw introduced in \cite{de2020chern} certain Minkowski weights ${\rm csm}_k(M)$ on the Bergman fan $\Sigma_M$ of a rank-$r$ matroid $M$, one for each dimension $0\leq k \leq r-1$, called the \textit{Chern-Schwartz-MacPherson (CSM) cycles} of $M$. The weights of the maximal cones of ${\rm csm}_k(M)$ are given by products of beta invariant of certain minors of $M$; see \cite[Definition 2.8]{de2020chern} for details. This construction is motivated by the fact that, when the matroid $M$ arises from a complex hyperplane arrangement, these Minkowski weights encode the Chern-Schwartz-MacPherson classes of the complement of the arrangement inside its wonderful compactification.

As shown in \cite{berget2023tautological}, CSM cycles of matroids are one of the various invariants that are nicely obtained from their tautological Chern classes.  Concretely, given a matroid $M$ of rank $r$, \cite[Theorem C]{berget2023tautological} states that
     \begin{equation} 
     \label{eq:C}
         {\rm csm}_k(M)=\iota^*(c_{r-1-k}(S_M))\cap \omega_M  \quad \in \MW_k(\Sigma_M),
     \end{equation}
where $\iota^*$ is as in \cref{eq:pullback} and $\omega_M$ is the Bergman class of $\Sigma_M$.
Keeping in mind the isomorphism in \cref{eq:isomorphism}, this shows that 
\begin{equation}\label{eq:ch}
{\rm ch}_k(M) := \iota^*(c_{k}(S_M)) \quad \in A^k(M)
\end{equation}
is the element of $A^k(M)$ satisfying 
\begin{equation}\label{eq:dualitycsm}
\delta_{\Sigma_M}({\rm ch}_k(M)) = {\rm csm}_{r-1-k}(M).
\end{equation}

\section{The rank-nullity ring of a matroid}\label{sec:rank-nullity}

\subsection{A formula for the tautological Chern classes}
\label{subsec:formula}
In this subsection we give an explicit formula for the Chern classes $c_k(S_M)$ and $c_k(Q_M)$ of the tautological subbundle and  quotient bundle of a matroid $M$ in the presentation given in \cref{def:chow} of the Chow ring of the free matroid $A^*(U_{n,n})$. 
We denote by $\rank$ the rank function of $M$, and by $\nulll$ its nullity function, namely, for $S\subseteq E$,
$$\nulll(S)=|S|-\rank(S).$$

\begin{thm} \label{mainformula}
Let $M$ be a matroid of rank $r$ on the set $E=[n]$. For $0\leq k \leq r$, the Chern class $c_k(S_M)$ of the tautological subbundle $S_M$ is given by 
$$c_k(S_M)=\sum \alpha_S(S_1, \dots, S_t, p_1, \dots, p_t) \, x_{S_1}^{p_1}x_{S_2}^{p_2}\ldots x_{S_t}^{p_t} \quad \in A^k(U_{n,n}),
$$
where the sum is taken over all chains of subsets $\emptyset \subsetneq S_1 \subsetneq \cdots \subsetneq S_t \subseteq E$ and all positive integers $p_1, \dots,p_t$ satisfying $\sum_{i=1}^t p_i = k$, with coefficients given by
$$\alpha_S(S_1, \dots, S_t, p_1, \dots, p_t) =
(-1)^k\binom{\rank(S_1)}{p_1}
\binom{\rank(S_2)-\Tilde{p}_1}{p_2}\dots
\binom{\rank(S_t)-\Tilde{p}_{t-1}}{p_t}$$
where $\Tilde{p}_i=\sum_{j=1}^ip_j$.
Similarly, for any for $0\leq k \leq n-r$, the Chern class $c_k(Q_M)$ of the tautological quotient bundle $Q_M$ is given by 
$$c_k(Q_M) = \sum \alpha_Q(S_1, \dots, S_t, p_1, \dots, p_t) \, x_{S_1}^{p_1}x_{S_2}^{p_2}\ldots x_{S_t}^{p_t} \quad \in A^k(U_{n,n}),$$
where the sum is taken over all chains of subsets $\emptyset \subsetneq S_1 \subsetneq \cdots \subsetneq S_t \subseteq E$ and all positive integers $p_1, \dots, p_t$ satisfying $\sum_{i=1}^t p_i = k$, with coefficients given by
$$\alpha_Q(S_1, \dots, S_t, p_1, \dots, p_t) =
(-1)^k\binom{\nulll(S_1)}{p_1}
\binom{\nulll(S_2)-\Tilde{p}_1}{p_2}\dots
\binom{\nulll(S_t)-\Tilde{p}_{t-1}}{p_t}$$
where $\Tilde{p}_i=\sum_{j=1}^ip_j$.
\end{thm}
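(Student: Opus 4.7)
The plan is to start from the product formula \eqref{ChernEquS}. Setting $L_i := \sum_{\emptyset \subsetneq S \subseteq E}(\rankiii(S) - \ranki(S))\, x_S$, the coefficient of $t^k$ gives $c_k(S_M) = (-1)^k e_k(L_0, \dots, L_{r-1})$ with $e_k$ the elementary symmetric polynomial. The first step is simply to expand this as a sum over $0 \le i_1 < \cdots < i_k \le r-1$ and over tuples $(T_1, \dots, T_k)$ of nonempty subsets of $E$, obtaining a sum of products of the form $x_{T_1} \cdots x_{T_k}$ with explicit rank-difference coefficients.

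Next, I would use the relations in $A^*(U_{n,n})$, which force $x_{T_1} \cdots x_{T_k}$ to vanish unless the $T_\ell$ form a multi-chain. Accordingly, I would collect contributions by the distinct subsets appearing, writing a typical nonzero monomial as $x_{S_1}^{p_1} \cdots x_{S_t}^{p_t}$ for a chain $S_1 \subsetneq \cdots \subsetneq S_t$ with $\sum p_j = k$. The bookkeeping step is then to identify each pair (index sequence $(i_1, \dots, i_k)$, assignment $\phi\colon [k] \to [t]$ with $|\phi^{-1}(j)| = p_j$) with a tuple of pairwise disjoint subsets $(I_1, \dots, I_t)$ of $\{0, \dots, r-1\}$, where $I_j$ records the positions assigned to $S_j$. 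Under this identification, the coefficient of $x_{S_1}^{p_1} \cdots x_{S_t}^{p_t}$ becomes $(-1)^k$ times $\sum_{(I_1,\dots,I_t)} \prod_{j} \prod_{i \in I_j} (r_{i+1}(S_j) - r_i(S_j))$.

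The decisive combinatorial input is the monotonicity of rank differences in a flag of matroid quotients. Since $M_i$ is a quotient of $M_{i+1}$, the function $S \mapsto r_{i+1}(S) - r_i(S)$ is monotone in $S$ (a standard consequence of the quotient inequality $r_{i+1}(T) - r_{i+1}(S) \ge r_i(T) - r_i(S)$), and evaluating at $S = E$ forces it to take values in $\{0,1\}$. Letting $A_j := \{i : r_{i+1}(S_j) > r_i(S_j)\}$, telescoping gives $|A_j| = \rank(S_j)$, and monotonicity gives the nested inclusion $A_1 \subseteq A_2 \subseteq \cdots \subseteq A_t$. The inner product in the sum above equals $1$ when $I_j \subseteq A_j$ for all $j$ and $0$ otherwise, so the sum is simply the number of pairwise disjoint tuples $(I_1,\dots,I_t)$ with $I_j \subseteq A_j$ and $|I_j| = p_j$.

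The nested chain $A_1 \subseteq \cdots \subseteq A_t$ turns this count into the binomial product $\alpha_S$: one chooses $I_1 \subseteq A_1$ in $\binom{\rank(S_1)}{p_1}$ ways, then $I_2 \subseteq A_2 \setminus I_1$ in $\binom{\rank(S_2) - p_1}{p_2}$ ways (using $I_1 \subseteq A_1 \subseteq A_2$), and so on. The quotient-bundle formula follows from an identical argument applied to \eqref{ChernEquQ}: the indices range over $[r,n-1]$ and telescoping gives $|A_j| = |S_j| - \rank(S_j) = \nulll(S_j)$. I expect the main obstacle to lie in the reindexing described in the second paragraph --- carefully passing from the ordered expansion (with one $L_{i_\ell}$ per factor and a choice of $T_\ell$ inside each) to the unordered data $(I_1, \dots, I_t)$ so that no configuration is over- or under-counted. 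Once that identification is in place, the rest of the proof reduces to the clean counting argument enabled by the nested $A_j$'s.
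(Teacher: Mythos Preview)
Your proposal is correct and follows essentially the same route as the paper: both expand the product \eqref{ChernEquS}, use the Chow relations to restrict to chains, and then count contributions via the nested index sets $A_j=\{i:r_{i+1}(S_j)>r_i(S_j)\}$ (the paper's $\zeta_{S_j}^{S_M}$), whose sizes telescope to $\rank(S_j)$ and whose nesting yields the binomial product. The reindexing you flag as a potential obstacle is exactly the bijection the paper uses implicitly, and your justification that the rank increments lie in $\{0,1\}$ via monotonicity plus evaluation at $S=E$ is a clean variant of the paper's appeal to the rank-one steps in the flag.
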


\begin{proof}
Given a sequence of matroids $M_0, M_1, \ldots, M_{n}$ with properties as given in \cref{subsec:taut}, we use the formula for the Chern roots of $S_M$ and $Q_M$ in \cite[Remark III.1]{berget2023tautological}, recalled above in \cref{ChernEquS} and \cref{ChernEquQ}: 
\begin{equation}\label{ChernEquSagain} 
    \sum_{k=0}^{r} c_k(S_M)t^k= \prod_{i=0}^{r-1}\Biggl(1-t\sum_{\emptyset\subsetneq S\subseteq E}\bigl(\rankiii(S)-\ranki(S) \bigr)x_S  \Biggr),
\end{equation}
\begin{equation*} 
    \sum_{k=0}^{n-r} c_k(Q_M)t^k= \prod_{i=r}^{n-1}\Biggl(1-t\sum_{\emptyset\subsetneq S\subseteq E}\bigl(\rankiii(S)-\ranki(S) \bigr)x_S  \Biggr).
\end{equation*}
An explicit formula for each of the Chern classes $c_k(S_M)$ and $c_k(Q_M)$ is obtained by expanding the products above. The fact that the sequence of matroids $M_0, M_1, \dots, M_n$ increases rank by one every step implies that the difference $\rankiii(S)-\ranki(S)$ is either $0$ or $1$ for all $i$ and all $S \subseteq [n]$. For each $0 \leq i \leq n-1$, let
$$\mathscr{S}_i=\{S\subseteq E: \rankiii(S)-\ranki(S)=1\}.$$
Moreover, for each nonempty $S \subseteq E$, let 
\begin{align*}
  \zeta_S^{S_M}&=\{i\in[0,r-1]: S\in\mathscr{S}_i\},\\ 
  \zeta_S^{Q_M}&=\{i\in[r,n-1]: S\in\mathscr{S}_i\}.
\end{align*}
Whenever $S\subseteq T$ we have the inclusions $\zeta^{S_M}_S\subseteq \zeta_T^{S_M}$ and $\zeta^{Q_M}_S\subseteq \zeta_T^{Q_M}$; see \cite[Proposition 7.3.6 (iii)]{oxley2006matroid}. Moreover, note that $|\zeta_S^{S_M}|= \rank(S)$ since $\rankMz(S)= 0$ and $\rankMr(S)=\rank(S)$. 
Likewise, we have $|\zeta_S^{Q_M}|= \nulll(S)$ since $\rankMn(S)= |S|$ and $\rankMr(S)=\rank(S)$. 

To prove the formula for $c_k(S_M)$, we write the product in \cref{ChernEquSagain} as  
\begin{equation*} 
\label{ChernEqu2}
    \prod_{i=0}^{r-1}\big(1-t \sum_{S\in \mathscr{S}_i} x_S\big).
\end{equation*}
By the relations in the Chow ring $A^*(U_{n,n})$, if $S$ and $T$ are incomparable then $x_Sx_T=0$. 
When expanding this product, we thus obtain only monomials of the form $x_{S_1}^{p_1}x_{S_2}^{p_2}\ldots x_{S_t}^{p_t}$ for a chain of subsets $\emptyset \subsetneq S_1 \subsetneq \cdots \subsetneq S_t \subseteq E$. 
To find the corresponding coefficient $\alpha_S(S_1, \dots, S_t, p_1, \dots, p_t)$, we need to count the number of times the monomial $x_{S_1}^{p_1}x_{S_2}^{p_2}\ldots x_{S_t}^{p_t}$ is obtained in this expansion. The variable $x_{S_1}$ appears in exactly $|\zeta^{S_M}_{S_1}|=\rank(S_1)$ many of the sums $\sum_{S\in \mathscr{S}_i} x_S$, hence there are $\binom{\rank(S_1)}{p_1}$ number of ways of obtaining $x_{S_1}^{p_1}$. The variable $x_{S_2}$ appears in exactly $|\zeta^{S_M}_{S_2}|=\rank(S_2)$ many of the sums $\sum_{S\in \mathscr{S}_i} x_S$ (which include all the sums that contained $x_{S_1}$), but we cannot choose it from the sums where we chose $x_{S_1}$, hence there are $\binom{\rank(S_2)-p_1}{p_2}$ ways of obtaining $x_{S_2}^{p_2}$. 
In general, the variable $x_{S_j}$ appears in exactly $|\zeta^{S_M}_{S_j}|=\rank(S_j)$ many sums $\sum_{S\in \mathscr{S}_i} x_S$  (which include all the sums that contained the previous variables), and since we cannot choose it from the sums where we have chosen previous variables $x_{S_l}$ with $1 \leq l\leq j-1$, there are $\binom{\rank(S_j)-\Tilde{p}_{j-1}}{p_j}$ ways of obtaining $x_{S_j}^{p_j}$. 
This proves the claimed formula for $c_k(S_M)$. A similar argument proves the formula for $c_k(Q_M)$.
\end{proof}

\begin{example*}
\label[example]{ex:K4}
Consider the rank-$3$ graphical matroid $M$ arising from the complete graph $K_4$ on 4 vertices. The ground set of $M$ is the set of edges of $K_4$; see \cref{im: K4}. The bases of $M$ are the spanning trees of $K_4$, so the subsets of size 3 that are not bases are $\{\{1,2,5\},\{1,4,6\},\{2,3,6\},\{3,4,5\}\}$. 
\begin{figure}[htb]
\begin{center}
\begin{tikzpicture}[line width=1.2pt, scale=2.5]
\tikzset{
    v/.style={circle, fill=black, inner sep=1.2pt},
    lab/.style={fill=white, inner sep=2pt}
  }

  \node[v]  (A) at (0,1) {};
  \node[v]  (B) at (1,1) {};
  \node[v]  (C) at (1,0) {};
  \node[v]  (D) at (0,0) {};

  \draw (A) -- (B) node[midway, above] {$1$};
  \draw (B) -- (C) node[midway, right] {$2$};
  \draw (C) -- (D) node[midway, below] {$3$};
  \draw (D) -- (A) node[midway, left] {$4$};
  \draw (D) -- (B) node[midway, above = 12 pt, , xshift=-12pt] {$5$};
  \draw (A) -- (C) node[midway, above = 12 pt, xshift=12pt] {$6$};
\end{tikzpicture}
    \caption{The complete graph $K_4$.}
        \label{im: K4}
\end{center}
\end{figure}
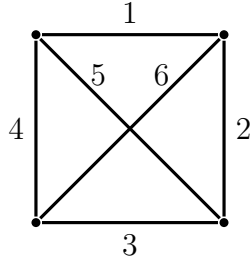
The sequence $(U_{0,6}, U_{1,6}, U_{2,6}, M, U_{4,6}, U_{5,6}, U_{6,6})$
satisfies the properties given in the proof of \cref{mainformula}. The corresponding subsets $\mathscr{S}_i$ are:
\begin{align*}
       \mathscr{S}_0&=\{\emptyset \subsetneq S\subseteq E \},\ 
       &\mathscr{S}_3&=\{\emptyset \subsetneq S\subseteq E : \nulll(S) \geq 1 \},\\
       \mathscr{S}_1&=\{\emptyset \subsetneq S\subseteq E : \rank(S) \geq 2 \},\ 
       &\mathscr{S}_4&=\{\emptyset \subsetneq S\subseteq E : \nulll(S) \geq 2 \},\\
       \mathscr{S}_2&=\{\emptyset \subsetneq S\subseteq E : \rank(S) =3 \},\ &\mathscr{S}_5&=\{E \}. 
   \end{align*}
   Moreover, we have that
    \begin{equation*}
       \zeta_S^{S_M}(S) = 
       \begin{cases}
       \{0\} &\text{ if } \rank(S) = 1,\\
       \{0,1\} &\text{ if } \rank(S) = 2,\\
        \{0,1,2\} &\text{ if } \rank(S) = 3,\\ 
        \end{cases} \ \qquad \ 
        \zeta_S^{Q_M}(S) = \begin{cases}
       \{3\} &\text{ if } \nulll(S) = 1,\\
       \{3,4\} &\text{ if } \nulll(S) = 2, \\
       \{3,4,5\} &\text{ if } S = E.     
       \end{cases}
   \end{equation*}
 Note that $\zeta_S^{S_M} \subseteq \zeta_T^{S_M}$ and $\zeta_S^{Q_M} \subseteq \zeta_T^{Q_M}$  whenever $S\subseteq T$, as predicted. 
 According to Equations \eqref{ChernEquS} and \eqref{ChernEquQ}, the Chern classes of $S_{M}$ and $Q_{M}$ are given by 
 \begin{align*}
 c_0(S_M) + c_1(S_M) t + c_2(S_M) t^2 + c_3(S_M) t^3 &= (1-t\sum_{S\in \mathscr{S}_0}x_S)(1-t\sum_{S\in \mathscr{S}_1}x_S)(1-t\sum_{S\in \mathscr{S}_2}x_S), \\
  c_0(Q_M) + c_1(Q_M) t + c_2(Q_M) t^2 + c_3(Q_M) t^3&= (1-t\sum_{S\in \mathscr{S}_3}x_S)(1-t\sum_{S\in \mathscr{S}_4}x_S)(1-t\sum_{S\in \mathscr{S}_5}x_S).
 \end{align*}
For $i = 1,2,3$, let $\mathcal S_i$ be the set of nonempty subsets of $E$ of rank $i$. The expression for the Chern class $c_2(S_M)$ given in \cref{mainformula} is 
\begin{align*}
    c_2(S_M)= \sum_{S\in \mathcal{S}_2} x_S^2 
    & + 3 \sum_{S\in \mathcal{S}_3} x_S^2 
+ \sum_{S\in \mathcal{S}_1,T\in \mathcal{S}_2}x_{S}x_{T}
+ 2 \sum_{S\in \mathcal{S}_1,T\in \mathcal{S}_3}x_{S}x_{T}\\ 
    & + 2\sum_{S,T\in \mathcal{S}_2}x_{S}x_{T} 
+ 4 \sum_{S\in \mathcal{S}_2,T\in \mathcal{S}_3}x_{S}x_{T}
    +  6 \sum_{S,T\in \mathcal{S}_3}x_{S}x_{T},
\end{align*}
where all these sums can be taken only over subsets that form a chain under inclusion.
\end{example*}

We now give a formula for ${\rm ch}_k(M) \in A^k(M)$, which is the dual element to the csm cycle ${\rm csm}_{r-1-k}(M)\in \MW_{r-1-k}(M)$, as in \cref{eq:dualitycsm}. Inspired by a previous version of this paper in which our formula was still conjectural, the authors of \cite{caraballo2024staircase} have also provided a separate proof of an equivalent statement (see \cref{rem:staircase}), including a geometric proof in the realizable case. 

\begin{cor}
\label[cor]{corcsm}
Let $M$ be a matroid of rank $r$ on $E=[n]$. Then the element ${\rm ch}_k(M) \in A^k(M)$ dual to ${\rm csm}_{r-1-k}(M)\in \MW_{r-1-k}(M)$ is given by
$${\rm ch}_k(M)=\sum \alpha_S(F_1 , \dots, F_t,p_1,\dots,p_t)\,x_{F_1}^{p_1}x_{F_2}^{p_2}\ldots x_{F_t}^{p_t},$$
where the sum is taken over all chains of flats $\emptyset \subsetneq F_1 \subsetneq \cdots \subsetneq F_t \subseteq E$ and all positive integers $p_1, \dots, p_t$ satisfying $\sum_{i=1}^t p_i = k$, with coefficients given by
$$\alpha_S(F_1 , \ldots, F_t,p_1,\dots,p_t) =
(-1)^k\binom{\rank(F_1)}{p_1}
\binom{\rank(F_2)-\Tilde{p}_1}{p_2}\ldots
\binom{\rank(F_t)-\Tilde{p}_{t-1}}{p_t}$$
where $\Tilde{p}_i=\sum_{j=1}^ip_j$.
\end{cor}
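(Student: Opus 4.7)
The plan is to derive the corollary directly from Theorem \ref{mainformula} by applying the pullback map $\iota^*: A^*(U_{n,n}) \to A^*(M)$ recalled in \cref{eq:pullback}. By the definition in \cref{eq:ch}, we have ${\rm ch}_k(M) = \iota^*(c_k(S_M))$, so the corollary reduces to computing $\iota^*$ term by term on the explicit expression for $c_k(S_M)$ provided by \cref{mainformula}.

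First, I will recall from \cref{eq:pullback} that $\iota^*$ is a ring homomorphism sending $x_S$ to $x_S$ when $S$ is a nonempty flat of $M$ and to $0$ otherwise. Applying this to a monomial $x_{S_1}^{p_1} x_{S_2}^{p_2} \cdots x_{S_t}^{p_t}$ associated with a chain $\emptyset \subsetneq S_1 \subsetneq \cdots \subsetneq S_t \subseteq E$, we get $x_{S_1}^{p_1} \cdots x_{S_t}^{p_t}$ in $A^*(M)$ if every $S_i$ is a flat of $M$, and $0$ otherwise. Consequently, when we pull back the sum from \cref{mainformula}, every term with at least one non-flat entry in the chain vanishes, and only the contributions indexed by chains of nonempty flats $\emptyset \subsetneq F_1 \subsetneq \cdots \subsetneq F_t \subseteq E$ survive.

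Next, I will observe that the coefficients $\alpha_S(S_1,\dots,S_t,p_1,\dots,p_t)$ appearing in \cref{mainformula} depend only on the rank function of $M$ evaluated at the entries of the chain, which is perfectly well-defined whether or not the $S_i$ are flats. Thus, the surviving terms carry exactly the coefficients $\alpha_S(F_1,\dots,F_t,p_1,\dots,p_t)$ predicted by the statement of the corollary. Assembling these observations yields the claimed formula verbatim.

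The argument is essentially mechanical once the formula for $c_k(S_M)$ and the description of $\iota^*$ are in place, so there is no substantive obstacle. The only point worth flagging is that the compatibility of the pullback with powers and products is used implicitly; since $\iota^*$ is a ring homomorphism and the monomials in question correspond to chains (so their images in $A^*(M)$ satisfy the Feichtner–Yuzvinsky incomparability relations automatically), no cancellations between vanished and surviving terms can occur, and the resulting expression is manifestly the one in \cref{corcsm}.
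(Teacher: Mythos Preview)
Your proposal is correct and follows essentially the same approach as the paper: the paper's proof also invokes \cref{eq:ch} to write ${\rm ch}_k(M) = \iota^*(c_k(S_M))$ and then applies the description of $\iota^*$ from \cref{eq:pullback} to the formula in \cref{mainformula}. Your write-up simply spells out in more detail why only the chains of flats survive under $\iota^*$.
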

\begin{proof}
\cref{eq:ch} states that ${\rm ch}_k(M) = \iota^*(c_k(S_M))$. The result thus follows from \cref{mainformula} and the description of $\iota^*$ given in \cref{eq:pullback}. 
\end{proof}

\begin{rem}\label[rem]{rem:staircase} 
One possible choice for the sequence of matroids $M_0, M_1, \dots, M_n$ in the proof of \cref{mainformula} is 
the \emph{full Higgs lift} of of the matroid $M$, where $M_i$ is taken to be the matroid with bases
\begin{align*}
    \mathcal{B}_i=\Big\{ S\in \binom{E}{i}\ :\ S \text{ contains or is contained in a basis of }M\Big\}. 
\end{align*}
For example, the sequence given in \cref{ex:K4} is the full Higgs lift of the graphical matroid of $K_4$.
With this choice, \cref{ChernEquS} and \cref{ChernEquQ} become
\begin{align}
    \sum_{k=0}^r c_k(S_M)t^k=& \prod_{i=1}^{r}\Big(1-\sum_{\rank(S)\geq i}x_S\Big)t,\label{eq1staircase}\\
    \sum_{k=0}^{n-r} c_k(Q_M)t^k=&\prod_{i=1}^{n-r}\Big(1-\sum_{\nulll(S)\geq i}x_S\Big)t.
\end{align}
Indeed, for $0 \leq i \leq r-1$, the rank function $\ranki$ of the matroid $M_i$ is given by $\ranki(S) = \min(i, \rank(S))$, which implies that
$\mathscr{S}_{i}=\{S\subseteq E: \rankiii(S)-\ranki(S)=1\}=\{S\subseteq E : \rank(S)\geq i+1 \}$.
Similarly, for $r+1 \leq i\leq n$, the rank function $\ranki$ of the matroid $M_i$ is given by
$\ranki(S)=\min(|S|,\rank(S)+i-r)$, which implies that $\mathscr{S}_{r+j} = \{S\subseteq E: r_{M_{r+j+1}}(S)-r_{M_{r+j}}(S)=1\}=\{S\subseteq E \ |\  \nulll(S)\geq j +1\}$.

After taking the cap product of \cref{eq1staircase} with the Bergman class $\omega_M$ of $M$, this leads to the ``staircase'' formula for csm cycles of $M$ provided in \cite{caraballo2024staircase}:
\[\sum_{k=0}^r {\rm ch}_k(M)\,t^k= \prod_{i=1}^{r}\Big(1-\sum_{\substack{F \text{ flat}\\\rank(F)\geq i}}x_F\Big)\,t \quad \in A^*(M)[t].\]
This formula is equivalent to our \cref{corcsm}, by a similar argument to the proof of \cref{mainformula}.
\end{rem}

\subsection{The rank-nullity ring of a matroid}
In this subsection we introduce the rank-nullity ring of a matroid $M$ as certain subring of the permutahedral Chow ring $A^*(U_{n,n})$, and show that it contains the tautological Chern classes $c_k(S_M),c_k(Q_M)$ of $M$.
\begin{defn}
\label{def:ranknullity}
Let $M$ be a matroid on the set $[n]$ with rank function $\rank$ and nullity function $\nulll$. 
For $0\leq i,j \leq n$, consider the degree-$1$ element
$$y_{i,j} := \sum_{\substack{\emptyset \neq S \subseteq [n] \\ \rank(S)=i,\ \nulll(S)=j}}x_S \quad \in A^*(U_{n,n}).$$
Note that $y_{i,j}=0$ whenever $i+j > n$ or $(i,j) = (0,0)$.
The \emph{rank-nullity ring} $R^*(M)$ of the matroid $M$ is the graded subring
$$R^*(M) := \mathbb{Z}[y_{i,j} \,:\, 0 \leq i, j \leq n] \quad \subseteq A^*(U_{n,n}).$$
\end{defn}

We provide a simple example of a rank-nullity ring before stating our main result of this subsection.

\begin{example}
\label[example]{ex:ranknullityK4}
Consider the rank-$3$ graphical matroid $M$ of the complete graph $K_4$ on 4 vertices, discussed in \cref{ex:K4}. 
The rank-nullity ring $R^*(M)$ is equal to 
    $$R^*(M):=\mathbb{Z}[y_{1,0},y_{2,0},y_{2,1},y_{3,0},y_{3,1},  
    y_{3,2}, y_{3,3}] \quad \subseteq A^*(U_{6,6})$$
where the elements $y_{i,j}$ are 
\begin{align*}
    y_{1,0} &= \sum_{1\leq i\leq 6} x_{\{i\}},
    &y_{2,0} &= \sum_{1\leq i<j \leq 6} x_{\{i,j\}}, \\
    y_{2,1} &= x_{\{1,2,5\}}+x_{\{1,4,6\}}+x_{\{2,3,6\}}+x_{\{3,4,5\}},
    &y_{3,0} &= \sum_{1\leq i<j<k \leq 6} x_{\{i,j,k\}} - y_{2,1} \\
 y_{3,1} &= \sum_{1\leq i<j<k<l \leq 6} x_{\{i,j,k,l\}} ,
  & y_{3,2} &= \sum_{1\leq i<j<k<l<m \leq 6} x_{\{i,j,k,l,m\}}, \\
   y_{3,3} &=  x_{\{1,2,3,4,5,6\}}.
\end{align*}
The subring $R^*(M)$ is generated by only $7$ elements, compared to the $63$ variables that generate $A^*(U_{6,6})$.
We will see in \cref{ex:hilbertfunctions} that the degree-$1$ part of $R^*(M)$ is isomorphic to $\ZZ^6$, while the degree-$1$ part of $A^*(U_{6,6})$ is isomorphic to $\ZZ^{57}$.   
\end{example}

Our motivation for introducing the rank-nullity ring of a matroid comes from the following fact.
\begin{thm}\label{thmy}
    Let $M$ be a rank-$r$ matroid on the set $[n]$ with rank function $\rank$ and nullity function $\nulll$. The Chern classes $c_k(S_M)$ and $c_k(Q_M)$ of the tautological bundles of $M$ belong to the rank-nullity ring $R^*(M)$. More concretely, 
    \begin{align*}
        c_k(S_M)&=\sum \alpha_S(r_1,\ldots, r_t, p_1, \dots, p_t)\, y_{r_1,n_1}^{p_1}y_{r_2,n_2}^{p_2}\cdots y_{r_t,n_t}^{p_t}\quad \text{and}\\
        c_k(Q_M)&=\sum \alpha_Q(n_1,\ldots, n_t, p_1, \dots, p_t)\, y_{r_1,n_1}^{p_1}y_{r_2,n_2}^{p_2}\cdots y_{r_t,n_t}^{p_t},
    \end{align*}
    where the sums are taken over all sequences $(0,0)<(r_1,n_1)<  \dots < (r_t, n_t) \leq (r,n-r)$ (under the coordinatewise partial order) and all positive integers $p_1, \dots, p_t$ satisfying $\sum_{m=1}^tp_m = k$, with coefficients given by 
    \begin{align*}
        \alpha_S(r_1,\ldots, r_t, p_1, \dots, p_t) &:= 
        (-1)^k\binom{r_1}{p_1} \binom{r_2-\Tilde{p}_1}{p_2}\dots \binom{r_t-\Tilde{p}_{t-1}}{p_t}\quad \text{and}\\
        \alpha_Q(n_1,\ldots, n_t, p_1, \ldots, p_t) &:= (-1)^k\binom{n_1}{p_1}\binom{n_2-\Tilde{p}_1}{p_2}\dots\binom{n_t-\Tilde{p}_{t-1}}{p_t}, 
    \end{align*}
    where $\Tilde{p}_i=\sum_{j=1}^ip_j$.
\end{thm}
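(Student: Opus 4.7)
The plan is to reorganize the explicit formula for $c_k(S_M)$ from \cref{mainformula} by grouping the chains $\emptyset \subsetneq S_1 \subsetneq \cdots \subsetneq S_t \subseteq [n]$ according to the sequence of rank-nullity pairs $(r_i, n_i) := (\rank(S_i), \nulll(S_i))$. The coefficient $\alpha_S(S_1,\ldots,S_t,p_1,\ldots,p_t)$ in \cref{mainformula} is a product of binomial coefficients involving only the ranks $r_i$, so it equals the $\alpha_S(r_1,\ldots,r_t,p_1,\ldots,p_t)$ of the statement. Moreover, both $\rank$ and $\nulll$ are monotone non-decreasing under set inclusion, with $\rank(E)=r$ and $\nulll(E)=n-r$, and any strict inclusion $S_i\subsetneq S_{i+1}$ strictly increases $|S_i|=r_i+n_i$; these facts show that the admissible sequences of pairs are precisely the strictly increasing chains $(0,0)<(r_1,n_1)<\cdots<(r_t,n_t)\leq(r,n-r)$ in the coordinatewise partial order.

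The heart of the argument is then the identity
\[
\sum_{\substack{S_1 \subsetneq \cdots \subsetneq S_t \\ \rank(S_i) = r_i,\ \nulll(S_i) = n_i}} x_{S_1}^{p_1} \cdots x_{S_t}^{p_t} \;=\; y_{r_1, n_1}^{p_1} \cdots y_{r_t, n_t}^{p_t}
\]
in $A^*(U_{n,n})$. To prove it, I will expand each factor $y_{r_i,n_i}^{p_i}$ via the multinomial theorem: all sets appearing in $y_{r_i,n_i}$ share the cardinality $r_i+n_i$, and the Chow relation $x_Sx_T=0$ for incomparable $S,T$ forces every mixed multinomial term to collapse onto a single set, so $y_{r_i,n_i}^{p_i}=\sum_{\rank(S)=r_i,\,\nulll(S)=n_i} x_S^{p_i}$. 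Multiplying across $i$, a nonvanishing cross term requires pairwise comparability of the chosen sets, and the strict cardinality inequality $|S_1|<\cdots<|S_t|$ coming from the strict coordinatewise chain then forces $S_1\subsetneq\cdots\subsetneq S_t$, recovering the left-hand side with coefficient $1$.

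Combining these two observations yields the claimed expression for $c_k(S_M)$ as a polynomial in the $y_{i,j}$, proving in particular that $c_k(S_M)\in R^*(M)$. The formula for $c_k(Q_M)$ follows by the identical argument applied to the second part of \cref{mainformula}: the coefficient $\alpha_Q$ depends on each chain only through the nullities $n_i$, while the grouping by rank-nullity pairs and the identity above remain unchanged. I do not anticipate a significant obstacle; the proof is essentially bookkeeping, with the only delicate point being the multinomial collapse in the second step, which relies crucially on every subset appearing in a single $y_{r_i,n_i}$ having the same cardinality.
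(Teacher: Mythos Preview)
Your proposal is correct and follows essentially the same route as the paper: starting from the formula in \cref{mainformula}, grouping chains by their rank-nullity profile, and using the observation that all subsets in a single $y_{i,j}$ share the cardinality $i+j$ (hence are pairwise incomparable) to obtain $y_{i,j}^{p}=\sum_{\rank(S)=i,\,\nulll(S)=j} x_S^{p}$ and then the key factorization identity. The paper phrases the last step as ``drop the chain condition (the extra terms vanish) and factor,'' which is exactly your multinomial collapse read in the opposite direction.
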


\begin{proof}    
For fixed $0\leq i,j \leq n$, any two distinct subsets $S, T \subseteq [n]$ of rank $i$ and nullity $j$ have both size $i+j$ and are thus incomparable, which implies that $x_Sx_T=0$ in $A^*(U_{n,n})$. It follows that
\begin{equation}
\label{eq: prod}
y_{i,j}^m=\sum_{\rank(S)=i,\, \nulll(S)=j}x_S^m
\end{equation}
 for all $m \geq 0$. 
Recall the formula 
\begin{equation}\label{eq:chernS}
c_k(S_M)=\sum_{\substack{\emptyset \subsetneq S_1 \subsetneq \cdots \subsetneq S_t \subseteq [n]\\p_1+\dots+p_t = k}} \alpha_S(S_1, \dots, S_t, p_1, \dots, p_t) \, x_{S_1}^{p_1}x_{S_2}^{p_2}\ldots x_{S_t}^{p_t}
\end{equation}
stated in \cref{mainformula}.
Note that the coefficients $\alpha_S(S_1, \dots, S_t, p_1, \dots, p_t)$ given there only depend on the sequence of ranks $\rank(S_1), \dots, \rank(S_t)$ and the exponents $p_1, \dots, p_t$. Abusing notation slightly, we hence denote
\begin{align*}
  \alpha_S(S_1, \dots, S_t, p_1, \dots, p_t) &=\alpha_S(\rank(S_1), \dots, \rank(S_t), p_1, \dots, p_t).
\end{align*}
We can group together the terms in \cref{eq:chernS} corresponding to the same sequence $(0,0)<(r_1,n_1)<  \dots < (r_t, n_t) \leq (r,n-r)$ of ranks and nullities to obtain 
$$c_k(S_M) = \sum_{\substack{(r_1,n_1)<  \dots < (r_t, n_t)\\p_1+\dots+p_t = k}} 
    \bigl(  \alpha_S(r_1,\ldots, r_t, p_1, \ldots, p_t) \sum_{\substack{S_1\subsetneq \dots \subsetneq S_t\\
    \rank(S_j)=r_j\\
    \nulll(S_j)=n_j}} x_{S_1}^{p_1}x_{S_2}^{p_2} \dots x_{S_t}^{p_t} \bigr).$$
In the inner sum we can ignore the condition that the subsets $S_1, \dots, S_t$ form a chain (since the extra terms are all equal to $0$), and factor each summand as  
\begin{align*}
 c_k(S_M) &= \sum_{\substack{(r_1,n_1)<  \dots < (r_t, n_t)\\p_1+\dots+p_t = k}} \alpha_S(r_1,\ldots, r_t, p_1, \ldots p_t) \bigl( \sum_{\substack{
    \rank(S_1)=r_1\\
    \nulll(S_1)=n_1}} \!\!x_{S_1}^{p_1} \bigr) \bigl( \sum_{\substack{
    \rank(S_2)=r_2\\
    \nulll(S_2)=n_2}} \!\!x_{S_2}^{p_2} \bigr) \cdots \bigl( \sum_{\substack{
    \rank(S_t)=r_t\\
    \nulll(S_t)=n_t}} \!\! x_{S_t}^{p_t} \bigr) \\ 
    &=  \sum_{\substack{(r_1,n_1)<  \dots < (r_t, n_t)\\p_1+\dots+p_t = k}}  \alpha_S(r_1,\ldots, r_t, p_1, \ldots p_t) \, y_{r_1,n_1}^{p_1}y_{r_2,n_2}^{p_2}\cdots y_{r_t,n_t}^{p_t},
\end{align*}
which is the desired formula for $c_k(S_M)$. A similar argument proves the claimed formula for $c_k(Q_M)$.
\end{proof}

The rank-nullity ring $R^*(M)$ of a matroid $M$ can be much smaller than the Chow ring $A^*(U_{n,n})$. The following result quantifies exactly how many linearly independent generators (of degree 1) this subring has.

\begin{prop}
\label[prop]{prop:rel}Let $M$ be a matroid of rank $r$ on the set $[n]$. If $M$ is not a direct sum of a free matroid and a rank-$0$ matroid (both having at least one element), all linear relations among the generators $y_{i,j}$ of $R^*(M)\subseteq A^*(U_{n,n})$, where $0\leq i,j\leq n$, are generated by the following ones:
$$y_{0,0} = 0,$$
\vspace{-2mm}
$$y_{i,j} = 0 \quad \text{whenever } i+j > \max\{ |F| : F \text{ a rank-$i$ flat}\},$$ 
\begin{align*}
        \sum_{0\leq i,j \leq n} (i+j) \, y_{i,j} = 0.
    \end{align*}
If $M \cong U_{r,r} \oplus U_{0,n-r}$ with $1 \leq r \leq n-1$ then the same statement is true but replacing the last relation by the following two:
    \begin{align*}
        \sum_{0\leq i,j \leq n} i \, y_{i,j} = 0 \qquad \text{ and } \qquad \sum_{0\leq i,j \leq n} j \, y_{i,j} = 0.
    \end{align*}
\end{prop}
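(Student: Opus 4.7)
The plan is to first verify directly that the listed relations hold, and then to show they span the entire $\ZZ$-module of linear relations among the $y_{i,j}$. The equations $y_{0,0}=0$ and the vanishings $y_{i,j}=0$ for $i+j > \max\{|F| : F \text{ a rank-$i$ flat}\}$ simply say that certain $y_{i,j}$ are empty sums: if $S$ has rank $i$ and nullity $j$, then $\bar S$ is a rank-$i$ flat with $|\bar S|\ge|S|=i+j$; conversely, whenever the inequality holds one can construct such an $S$ by taking a basis of a largest rank-$i$ flat and appending $j$ further elements of that flat. The relation $\sum(i+j)y_{i,j}=0$ is obtained by summing the $n$ defining linear relations $L_e := \sum_{S\ni e}x_S = 0$ of $A^*(U_{n,n})$ over $e\in[n]$: the coefficient of $x_S$ becomes $|S|=\rank(S)+\nulll(S)$, and grouping by rank-nullity type yields the claim.

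The key step is to show completeness. Since the $L_e$ span the linear syzygies among the generators $x_S$ of $A^*(U_{n,n})$, any relation $\sum c_{i,j}\, y_{i,j} = 0$ may be written as $\sum_e \lambda_e L_e$ for some $\lambda\in\ZZ^n$. Comparing coefficients of each $x_S$ gives the pointwise identity
\begin{equation*}
    c_{\rank(S),\,\nulll(S)} \;=\; \sum_{e\in S}\lambda_e \qquad \text{for every nonempty } S\subseteq[n], \qquad (\star)
\end{equation*}
and conversely, any $\lambda$ for which the right-hand side depends only on $(\rank(S),\nulll(S))$ produces such a relation. Specializing $(\star)$ to singletons immediately forces $\lambda_e$ to take at most two values: $\alpha := c_{1,0}$ when $e$ is not a loop of $M$, and $\beta := c_{0,1}$ when $e$ is a loop of $M$.

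The case analysis then proceeds as follows. If $M$ has only loops or only non-loops, $\lambda$ is already a constant vector, so $(\star)$ forces $c_{i,j}=(i+j)\gamma$ for some $\gamma\in\ZZ$, and the relation is a multiple of $\sum(i+j)y_{i,j}=0$ modulo the trivial vanishings. If $M$ has both loops and non-loops but the loopless part $M\setminus L$ is not free, I would fix a circuit $C$ of $M\setminus L$ (necessarily of size $\ge 2$), an element $e_0\in C$, and a loop $\ell\in L$; the subsets $S = C$ and $S'=(C\setminus\{e_0\})\cup\{\ell\}$ share the same rank $|C|-1$ and nullity $1$, yet $|S\cap L|=0\ne 1=|S'\cap L|$. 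Feeding this pair into $(\star)$ forces $\alpha=\beta$, again collapsing to a constant $\lambda$. Finally, when $M\cong U_{r,r}\oplus U_{0,n-r}$ with $1\le r\le n-1$, the pair $(\rank(S),\nulll(S))$ equals $(|S\setminus L|,\,|S\cap L|)$ for every $S$, so $(\star)$ imposes no constraint between $\alpha$ and $\beta$; the two independent choices $\lambda=\mathbf{1}_{[r]}$ and $\lambda=\mathbf{1}_L$ yield the relations $\sum i\,y_{i,j}=0$ and $\sum j\,y_{i,j}=0$. The main obstacle is locating the pair $(S,S')$ used above: this is where the structural hypothesis on $M$ is genuinely used, since being a direct sum of a free matroid and a rank-$0$ matroid (each with at least one element) is precisely equivalent to $M$ having at least one loop \emph{and} $M\setminus L$ being free.
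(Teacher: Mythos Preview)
Your proof is correct and follows essentially the same route as the paper: both verify the listed relations directly, then write an arbitrary linear relation as $\sum_e \lambda_e L_e$, use singletons to force $\lambda_e$ to take at most two values (one for non-loops, one for loops), and in the non-exceptional case compare a circuit $C$ of size $\geq 2$ with the set $(C\setminus\{c\})\cup\{\ell\}$ to force those two values to coincide. The only cosmetic difference is in how the cases are carved up; the substance is identical.
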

\begin{proof}
   We first show that all these relations hold. By definition, $y_{i,j} = 0$ whenever there are no subsets of $[n]$ of rank $i$ and nullity $j$. Note that, for fixed $i$, the set of numbers $j$ for which this is the case is closed upwards, as it is always possible to remove an element from a dependent subset $S \subseteq [n]$ that decreases its nullity but not its rank. The rank-$i$ subsets of maximum nullity are flats of rank $i$, which shows the first set of relations hold. 
   
   The last relation is obtained by adding over all $e \in [n]$ the linear relations $0 = \sum_{S \ni e} x_S$ that hold in the Chow ring $A^*(U_{n,n})$, namely
   \begin{equation}\label{eq:onerelation}
       0 = \sum_{e \in [n]} \sum_{S \ni e} x_S = \sum_{\emptyset \neq S \subseteq [n]} |S| \, x_S = \sum_{0\leq i,j \leq n} (i+j) \, y_{i,j}.
   \end{equation}
   If $M$ consists only of coloops and loops, this last relation is actually implied by the following two:
   \begin{equation}\label{eq:tworelations}
       0 =  \sum_{e \text{ coloop}} \sum_{S \ni e} x_S = \sum_{0\leq i,j \leq n} i \, y_{i,j}  \qquad \text{ and } \qquad \sum_{e \text{ loop}} \sum_{S \ni e} x_S = \sum_{0\leq i,j \leq n} j \, y_{i,j}.
   \end{equation}
   
   To see that these relations generate all linear relations, suppose $\sum_{0\leq i,j \leq n} \alpha_{i,j} \, y_{i,j} = 0$
   for integers $\alpha_{i,j}$.After removing all the $y_{i,j}$ for which there are no subsets of $[n]$ of rank $i$ and nullity $j$, we can assume that all the $y_{i,j}$ involved in this linear relation are non-zero linear combinations of the variables $x_S$ of $A^*(U_{n,n})$. This relation must be generated by the linear relations that hold in $A^*(U_{n,n})$, so there are integers $\beta_e$ for $e \in [n]$ satisfying
   \begin{equation}\label{eq:linearequation}
      \sum_{0\leq i,j \leq n} \alpha_{i,j} \, y_{i,j} = \sum_{e \in [n]} \left( \beta_e \, \sum_{S \ni e} x_S \right).
   \end{equation}
   For any $e \in [n]$ that is not a loop, the coefficient of $x_{\{e\}}$ on the left-hand side of \cref{eq:linearequation} is $\alpha_{1,0}$, while on the right-hand side it is $\beta_e$; this shows that all coefficients $\beta_e$ with $e$ not a loop are equal to $\alpha_{1,0}$. 
   Similarly, for any loop $e \in [n]$, the coefficient of $x_{\{e\}}$ on the left-hand side of \cref{eq:linearequation} is $\alpha_{0,1}$, while on the right-hand side it is $\beta_e$, and so all $\beta_e$ with $e$ a loop are equal to $\alpha_{0,1}$.
   If $M$ consists only of coloops and loops, this shows that the linear relation is a combination of the two relations in \cref{eq:tworelations}.
   If $M$ has no loops, all the coefficients $\beta_e$ are equal, so the relation is a multiple of the relation in \cref{eq:onerelation}.
   Finally, if $M$ has a loop but does not decompose as $M \cong U_{r,r} \oplus U_{0,n-r}$ with $1 \leq r \leq n-1$, there must exist a circuit $C$ of $M$ of size $k \geq 2$. Take $c\in C$ and let $\ell$ be a loop of $M$. The coefficient of $x_{C}$ on the left-hand side of \cref{eq:linearequation} is $\alpha_{k-1,1}$, while on the right-hand side it is $k \alpha_{1,0}$. Furthermore, the coefficient of $x_{C-c\cup \ell}$ on the left-hand side of \cref{eq:linearequation} is $\alpha_{k-1,1}$, while on the right-hand side it is $(k-1) \alpha_{1,0} + \alpha_{0,1}$. This implies that $\alpha_{1,0} = \alpha_{0,1}$, and so the linear relation is again a multiple of the relation in \cref{eq:onerelation}.
\end{proof}

We conclude this subsection with a remark on the symmetry of the rank-nullity ring of a matroid.

The \emph{automorphism group} $\Aut(M)$ of a matroid $M$ on a ground set $E$ is the group of permutations of $E$ that preserve the rank of every subset.
For example, the automorphism group $\Aut(U_{r,n})$ of the uniform matroid $U_{r,n}$ is the symmetric group $S_n$. 
The group $\Aut(M)$ acts naturally on the Chow ring $A^*(M)$ of $M$; namely, a permutation $\sigma \in \Aut(M)$ acts as $\sigma(x_S)=x_{\sigma(S)}$.

\begin{prop}
\label[prop]{propaut}
Let $M$ be a matroid on the set $[n]$. The rank-nullity subring $R^*(M) \subseteq A^*(U_{n,n})$ is $\Aut(M)$-invariant, i.e., $$R^*(M) \subseteq A^*(U_{n,n})^{\Aut(M)}.$$ 
\end{prop}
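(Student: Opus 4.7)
The plan is to show that each generator $y_{i,j}$ of $R^*(M)$ is individually fixed by the action of $\Aut(M)$ on $A^*(U_{n,n})$. Since the $y_{i,j}$ generate $R^*(M)$ as a subring, and the set of $\Aut(M)$-invariants is closed under sums and products, the invariance of the generators immediately implies the invariance of the full subring.

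First I would clarify the $\Aut(M)$-action on $A^*(U_{n,n})$. Any $\sigma \in \Aut(M)$ is in particular a permutation of $[n]$, so it acts on the generators of $A^*(U_{n,n})$ by $\sigma(x_S) = x_{\sigma(S)}$. This action is well defined on $A^*(U_{n,n})$ because $S_n$ clearly preserves both the ``incomparable monomial'' relations and the linear relations $\sum_{S \ni e} x_S$ that appear in the presentation of the permutahedral Chow ring.

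The key observation is then the following: by definition of a matroid automorphism, $\rank(\sigma(S)) = \rank(S)$ for every $S \subseteq [n]$, and since $\sigma$ is a bijection of $[n]$ we also have $|\sigma(S)| = |S|$, hence $\nulll(\sigma(S)) = \nulll(S)$. Therefore $\sigma$ restricts to a bijection of the set $\{S \subseteq [n] : \rank(S) = i,\ \nulll(S) = j\}$ onto itself, which yields
\begin{equation*}
\sigma(y_{i,j}) = \sum_{\substack{\emptyset \neq S \subseteq [n] \\ \rank(S) = i,\ \nulll(S) = j}} x_{\sigma(S)} = \sum_{\substack{\emptyset \neq S \subseteq [n] \\ \rank(S) = i,\ \nulll(S) = j}} x_S = y_{i,j}.
\end{equation*}
Since every generator of $R^*(M)$ is $\Aut(M)$-invariant, we conclude $R^*(M) \subseteq A^*(U_{n,n})^{\Aut(M)}$.

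There is no real obstacle here; the statement is essentially a direct consequence of the fact that the defining data of each $y_{i,j}$, namely the numerical invariants $(\rank(S), \nulll(S))$, are by construction preserved by every matroid automorphism.
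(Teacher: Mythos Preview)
Your proof is correct and follows exactly the same approach as the paper: show that each generator $y_{i,j}$ is fixed by $\Aut(M)$ because automorphisms permute the subsets of any given rank and nullity. Your write-up is simply a more detailed version of the paper's one-sentence argument.
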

\begin{proof}
    Any $\sigma \in \Aut(M)$ permutes the set of subsets of $[n]$ of any given rank and nullity, and thus it fixes each of the generators $y_{i,j}\in R^*(M)$.
\end{proof}

The following example shows that the inclusion $R^*(M)\subseteq A^*(U_{n,n})^{\Aut(M)}$ can be strict.
\begin{example}
Again, consider the rank-$3$ graphic matroid $M$ of the complete graph $K_4$ on 4 vertices, discussed in \cref{ex:K4}. In this case, the automorphism group of $M$ is induced from the automorphism group of the underlying graph $K_4$, which is isomorphic to the symmetric group $S_4$. Explicitly, the automorphism group of $M$ consists of the permutations (listed in cycle notation) 
     \begin{align*}
         \Aut(M)=\{&(),\ (1 6)(3 5),\ (14)(23),\ (45)(26),\ (46)(52),\ (12)(34),\  (63)(15),\\ & (164)(235),\ (146)(253),\ (126)(345),\ (162)(354),\ (154)(362),\ (145)(326),\\ 
         & (125)(346),\ (152)(364),\ (65)(13),\ (13)(24),\ (24)(56),\ (1234)(56),\\ 
         &\ (13)(2645),\ (1536)(24),\ (13)(2546),\ (1635)(24), (1432)(56)\}\simeq S_4. 
     \end{align*}
Since any automorphism permutes the three perfect matchings of $K_4$, we have
$$x_{\{1,3\}}+x_{\{2,4\}}+x_{\{5,6\}}\in A^*(U_{n,n})^{\text{Aut}(M)}.$$
We can see from the description of the rank-nullity ring of $M$ given in \cref{ex:ranknullityK4} that $x_{\{1,3\}}+x_{\{2,4\}}+x_{\{5,6\}}\notin R^*(M)$, so the inclusion $R^*(M)\subseteq A^*(U_{n,n})^{\Aut(M)}$ is strict. 
\end{example}

\subsection{The rank-nullity ring of the uniform matroid}
\label{subsec:RU}
    If the matroid $M$ is the uniform matroid $U_{r,n}$, subsets of $[n]$ of size at most $r$ have all nullity zero, while subsets of size more than $r$ have all rank $r$. This means that the only generators $y_{i,j}$ of the rank-nullity ring $R^*(U_{r,n})$ that are nonzero are $y_{1,0}, y_{2,0}, \dots, y_{r,0}, y_{r,1}, y_{r,2}, \dots, y_{r,n-r}$. 
    To simplify the notation, in this case we will index the generators using just one index $l \in \{1,\dots,n\}$, as
    $$z_{l} := \sum_{S \in \binom{[n]}{l}} x_S = 
    \begin{cases} 
    y_{l,0} & \text{if } l\leq r\\
    y_{r,l-r} & \text{if } l> r
    \end{cases} \quad \in A^*(U_{n,n}).$$
    With this notation, we have
    $$R^*(U_{r,n})= \mathbb{Z}[z_1, z_2, \ldots, z_n]\subseteq A^*(U_{n,n}).$$
    Note that the rank-nullity ring $R^*(U_{r,n})$ of the uniform matroid $U_{r,n}$ does not depend on the rank $r$ but only on the size of the ground set $n$. 

\begin{prop}
\label[prop]{prop: RNU}
    The rank-nullity ring of the uniform matroid $U_{r,n}$ is equal to the $S_n$-invariant subring of the Chow ring $A^*(U_{n,n})$:
    $$R^*(U_{r,n})= A^*(U_{n,n})^{S_n}.$$
\end{prop}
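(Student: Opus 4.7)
The plan is to establish both inclusions of the claimed equality $R^*(U_{r,n}) = A^*(U_{n,n})^{S_n}$. The forward inclusion $R^*(U_{r,n}) \subseteq A^*(U_{n,n})^{S_n}$ is immediate from \cref{propaut} together with the fact that $\Aut(U_{r,n}) = S_n$; this direction requires no new work.

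For the reverse inclusion $A^*(U_{n,n})^{S_n} \subseteq R^*(U_{r,n})$, my plan is to identify products of the generators $z_l$ with $S_n$-orbit sums of monomials. The central identity is that, for any $1 \leq s_1 < s_2 < \cdots < s_t \leq n$ and positive integers $p_1, \ldots, p_t$,
\begin{equation*}
z_{s_1}^{p_1} z_{s_2}^{p_2} \cdots z_{s_t}^{p_t} = \sum_{\substack{S_1 \subsetneq S_2 \subsetneq \cdots \subsetneq S_t \\ |S_i| = s_i}} x_{S_1}^{p_1} x_{S_2}^{p_2} \cdots x_{S_t}^{p_t}.
\end{equation*}
I would verify this in the same spirit as equation \eqref{eq: prod} in the proof of \cref{thmy}: expanding the left-hand side, the relation $x_Sx_T=0$ for incomparable $S, T$ collapses each block $z_{s_i}^{p_i}$ to a diagonal sum $\sum_{|S_i| = s_i} x_{S_i}^{p_i}$, since two distinct subsets of equal size are never comparable; and between blocks of different sizes the same relation forces the chain condition. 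The right-hand side is then exactly the $S_n$-orbit sum of a monomial of chain-type $((s_1, p_1), \ldots, (s_t, p_t))$.

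To conclude, I would use the Feichtner--Yuzvinsky $\ZZ$-basis of $A^*(U_{n,n})$, whose elements are monomials $x_{S_1}^{a_1} \cdots x_{S_k}^{a_k}$ indexed by chains of subsets with exponent constraints depending only on the sizes $|S_i|$. Since $S_n$ preserves this basis, the invariant subring $A^*(U_{n,n})^{S_n}$ admits a $\ZZ$-basis consisting of $S_n$-orbit sums of FY basis elements, each of which is a product of $z_l$'s by the identity above and hence lies in $R^*(U_{r,n})$.

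The main obstacle I anticipate is a clean verification of the central identity, particularly the block-collapse step: one must carefully track how the incomparability relations act under expansion of a product of sums. Once the identity is established, the conclusion reduces to the standard fact that invariants of a free $\ZZ$-module under a permutation action are spanned over $\ZZ$ by orbit sums of a stable basis.
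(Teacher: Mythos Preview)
Your proposal is correct and follows essentially the same route as the paper: both directions are handled identically, and the reverse inclusion rests on the same central identity $z_{s_1}^{p_1}\cdots z_{s_t}^{p_t}=\sum_{S_1\subsetneq\cdots\subsetneq S_t,\,|S_i|=s_i} x_{S_1}^{p_1}\cdots x_{S_t}^{p_t}$, established via the incomparability relations exactly as in the proof of \cref{thmy}. Your explicit invocation of the Feichtner--Yuzvinsky basis (which the paper defers to the proof of \cref{prop:basisRNU}) is a slight improvement, since it makes the passage from ``$f$ is $S_n$-invariant'' to ``the coefficients can be taken to depend only on the sizes'' rigorous over $\ZZ$ without any averaging argument.
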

\begin{proof}
    The inclusion $R^*(U_{r,n})\subseteq A^*(U_{n,n})^{S_n}$ follows from \cref{propaut}. For the reverse inclusion, 
    let $f\in A^*(U_{n,n})^{S_n}$. We can write 
\begin{equation}
\label{eq:f}
f=\sum\alpha_f(S_1, \dots, S_t, p_1, \dots, p_t) \, x_{S_1}^{p_1}x_{S_2}^{p_2}\ldots x_{S_t}^{p_t}, 
\end{equation}
where the sum is over all sequences of subsets $\emptyset \subsetneq S_1 \subsetneq \cdots \subsetneq S_t \subseteq [n]$ and all positive integers $p_1, \dots, p_k$ satisfying $p_1+\dots+p_t \leq n-1$. Since $f$ is invariant under the action of $S_n$, the coefficients $\alpha_S(S_1, \dots, S_t, p_1, \dots, p_t)$ depend only on the sequence of sizes $|S_1|, \dots, |S_t|$ and the exponents $p_1, \dots, p_t$. Letting $s_i = |S_i|$, we thus denote 
\begin{align*}
  \alpha_f(s_1, \dots, s_t, p_1, \dots, p_t) := \alpha_f(S_1, \dots, S_t, p_1, \dots, p_t).
\end{align*}
We can group together the terms in \cref{eq:f} corresponding to the same sequence of sizes $1\leq s_1< s_2<  \dots < s_t \leq n$ to obtain 
$$f = \sum_{\substack{1 \leq s_1<  \dots < s_t \leq n\\p_1+\dots+p_t \leq n-1}} 
    \left(  \alpha_f(s_1,\ldots, s_t, p_1, \ldots, p_t) \sum_{\substack{S_1\subsetneq \dots \subsetneq S_t\\
    |S_j|=s_j}} x_{S_1}^{p_1}x_{S_2}^{p_2} \dots x_{S_t}^{p_t} \right).$$
As in the proof of \cref{thmy}, we can rewrite this as
\begin{align*}
f &= \sum_{\substack{1\leq s_1 <  \dots < s_t \leq n \\p_1+\dots+p_t \leq n-1}}  \alpha_f(s_1,\ldots, s_t, p_1, \ldots p_t) \left( \sum_{
    |S_1|=s_1} \!\!x_{S_1}^{p_1} \right) \left( \sum_{
    |S_2|=s_2} \!\!x_{S_2}^{p_2} \right) \cdots \left( \sum_{|S_t|=s_t} \!\! x_{S_t}^{p_t} \right) \\ 
    &=  \sum_{\substack{1\leq s_1 <  \dots < s_t \leq n\\p_1+\dots+p_t \leq n-1}}  \alpha_f(s_1,\ldots, s_t, p_1, \ldots p_t) \, z_{s_1}^{p_1}z_{s_2}^{p_2}\cdots z_{s_t}^{p_t},
\end{align*}
which shows that $f\in R^*(U_{r,n})$.
\end{proof}

In view of the result above, the study of the rank-nullity ring $R^*(U_{r,n})=A^*(U_{n,n})^{\Aut(M)}$ of the uniform matroids is of independent interest. In the remainder of this subsection we provide a $\ZZ$-basis for it and also a Gr\"obner basis for the relations among its generators.  

\begin{prop}
\label[proposition]{prop:basisRNU}
    The set 
 $$\mathcal{B}_n=\{z_{s_1}^{p_1}z_{s_2}^{p_2}\cdots z_{s_l}^{p_l} : 1\leq p_i < s_i-s_{i-1}\}$$
    (where by convention $s_0 = 0$) is a $\ZZ$-basis for the rank-nullity ring $R^*(U_{r,n})$ of the uniform matroid $U_{r,n}$.
\end{prop}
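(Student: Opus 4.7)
The plan is to leverage \cref{prop: RNU}, which identifies $R^*(U_{r,n})$ with the invariant subring $A^*(U_{n,n})^{S_n}$, and then to exhibit $\mathcal{B}_n$ as the collection of $S_n$-orbit sums of a known $\mathbb{Z}$-basis of $A^*(U_{n,n})$. Concretely, I would invoke the Feichtner--Yuzvinsky monomial $\mathbb{Z}$-basis of $A^*(U_{n,n})$ from \cite{feichtner2004chow}, which consists of the ``nested monomials'' $x_{S_1}^{p_1}x_{S_2}^{p_2}\cdots x_{S_t}^{p_t}$ indexed by strict chains $\emptyset\subsetneq S_1\subsetneq\cdots\subsetneq S_t\subseteq[n]$ with positive exponents satisfying $p_i<|S_i|-|S_{i-1}|$ for each $i$ (with the convention $|S_0|=0$), together with the empty monomial $1$.

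The first key observation is that $S_n$ acts on this basis by permuting subsets, preserving both the sizes $|S_i|$ and the exponent conditions; so $S_n$ permutes the FY basis. I would then use the standard fact that if a group $G$ acts on a free $\mathbb{Z}$-module by permuting a $\mathbb{Z}$-basis, then the $G$-orbit sums of basis elements form a $\mathbb{Z}$-basis of the invariant submodule (this follows because $G$-invariance forces equal coefficients within each orbit, and distinct orbits are disjoint). The $S_n$-orbits of FY basis elements are parameterized precisely by tuples $(s_1,\dots,s_t,p_1,\dots,p_t)$ with $1\le s_1<\cdots<s_t\le n$ and $1\le p_i<s_i-s_{i-1}$, matching the indexing set of $\mathcal{B}_n$ exactly.

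The final step is to verify that the orbit sums are exactly the elements of $\mathcal{B}_n$. Using \eqref{eq: prod} to write $z_{s_i}^{p_i}=\sum_{|S|=s_i}x_S^{p_i}$ and the Chow-ring vanishing $x_Sx_T=0$ for incomparable $S,T$, one directly expands
\[ z_{s_1}^{p_1}\cdots z_{s_t}^{p_t}\;=\;\sum_{\substack{|S_i|=s_i\\ S_1\subsetneq\cdots\subsetneq S_t}}x_{S_1}^{p_1}\cdots x_{S_t}^{p_t}, \]
which is precisely the $S_n$-orbit sum of $x_{S_1}^{p_1}\cdots x_{S_t}^{p_t}$. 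This identifies $\mathcal{B}_n$ with the orbit-sum basis of $A^*(U_{n,n})^{S_n}=R^*(U_{r,n})$, so $\mathcal{B}_n$ is a $\mathbb{Z}$-basis.

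The main obstacle I anticipate is being careful about the exact formulation of the FY basis, in particular that the staircase inequality $p_i<|S_i|-|S_{i-1}|$ holds uniformly for all indices $i$ including the top one (even when $S_t=[n]$); this uniformity is exactly what allows $\mathcal{B}_n$ to have the clean description in the statement. A quick Vandermonde computation on the cardinalities of $\mathcal{B}_n$ in each degree gives $\binom{n-1}{d}$, providing a sanity check consistent with \cref{prop:HfunctionU}.
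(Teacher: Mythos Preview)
Your proposal is correct and follows essentially the same approach as the paper: both invoke the Feichtner--Yuzvinsky basis $\mathcal{FY}_n$ of $A^*(U_{n,n})$, observe that $S_n$ permutes it, and identify the elements of $\mathcal{B}_n$ with the $S_n$-orbit sums of $\mathcal{FY}_n$. The only difference is packaging: you appeal to the general orbit-sum principle for permutation modules, whereas the paper spells out spanning and independence separately, but the underlying argument is identical.
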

\begin{proof}
       Applying \cite[Corollary 1]{feichtner2004chow} to the free matroid $U_{n,n}$, we get that the set 
    $$\mathcal{FY}_n=\{x_{S_1}^{p_1}x_{S_2}^{p_2}\cdots x_{S_l}^{p_l} : \emptyset \subsetneq S_1\subsetneq S_2 \subsetneq \cdots \subsetneq S_l \subseteq [n] \text{ and } p_i < |S_i|-|S_{i-1}|\}$$
    (where by convention $S_0 = \emptyset$) is a $\ZZ$-basis for the Chow ring $A^*(U_{n,n})$. 
    Note that the action of $S_n$ on $A^*(U_{n,n})$ permutes the elements of $\mathcal{FY}_n$.
    
    To show that the set $\mathcal{B}_n$ generates $R^*(U_{r,n})$, take $f\in R^*(U_{r,n})= A^*(U_{n,n})^{S_n}$ 
    and write it as a linear combination of the elements of $\mathcal{FY}_n$:
    \begin{align*}
        f= &\sum_{\substack{\emptyset \subsetneq S_1 \subsetneq \cdots \subsetneq S_l \subseteq [n]\\
       p_i < |S_i|-|S_{i-1}|}} \gamma(S_1,\dots, S_l, p_1, \dots, p_l)\,x_{S_1}^{p_1}x_{S_2}^{p_2}\dots x_{S_l}^{p_l}.
    \end{align*}
    Since $f$ is invariant under the action of $S_n$, the coefficients $\gamma(S_1,\dots, S_l, p_1, \dots, p_l)$ depend only on the $p_i$'s and the sizes $|S_1|, |S_2|, \dots, |S_l|$. Hence, we can group the expression above for $f$ in the same way as in the proof of \cref{prop: RNU} and obtain an expression for $f$ as a linear combination of the elements of $\mathcal{B}_n$.
    
    Finally, the set $\mathcal{B}_n$ is also linearly independent since its elements are finite sums of elements of $\mathcal{FY}_n$ without repetitions, and thus a linear dependence among the elements of $\mathcal{B}_n$ would imply a linear dependence among the elements of $\mathcal{FY}_n$. 
\end{proof}

\begin{example}
    The $\ZZ$-basis given in \cref{prop:basisRNU} for the rank-nullity ring $R^*(U_{r,5})$ of the uniform matroid $U_{r,5}$ is
        \begin{align*}
        \mathcal B_5= \{1,\,z_2,\,z_3,\,z_4,\,z_5,\,
        z_2z_4,\,z_2z_5,\,z_3^2,\,z_3z_5,\,z_4^2,\,z_5^2,\,
         z_2z_5^2,\,z_3^2z_5,\,z_4^3,\,z_5^3,\,z_5^4\}.
    \end{align*}
\end{example}

In \cref{prop:HfunctionU}, we use the basis $\mathcal B_n$ to compute simple expressions for the Hilbert function and Hilbert series of the rank-nullity ring of the uniform matroid.

We conclude this subsection with a description of all the relations among the generators $z_i$ of the rank-nullity ring of the uniform matroid. 
In fact, we give a Gr\"obner basis for the ideal of all such relations.

\begin{thm}\label{thm:grobnerbasis}
    The rank-nullity ring $R^*(U_{r,n})= A^*(U_{n,n})^{S_n}$ of the uniform matroids is isomorphic to the quotient of the free polynomial ring $\ZZ[Z_1, \dots, Z_n]$ by the ideal $\mathcal I$ generated by the polynomials
    \begin{equation}\label{eq:GB}
        Z_a \cdot \left( \sum_{i = b}^n \binom{i-a-1}{b-a-1} (Z_i + Z_{i+1} + \dots + Z_n )^{b-a} \right) \qquad \text{for $0\leq a < b \leq n$}
    \end{equation}
    (with the convention that $Z_0 = 1$), where the class $[Z_i]$ in this quotient corresponds to the element $z_i \in R^*(U_{r,n})$. 
    
    Furthermore, the polynomials in \cref{eq:GB} are a Gr\"obner basis for the ideal $\mathcal I$ with respect to any monomial order in which $Z_1 \succ Z_2 \succ \dots \succ Z_n$.
\end{thm}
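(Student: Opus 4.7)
The plan is to exhibit the polynomials in \eqref{eq:GB} as a $\ZZ$-Gr\"obner basis for the kernel $\mathcal I$ of $\phi\colon \ZZ[Z_1,\dots,Z_n]\twoheadrightarrow R^*(U_{r,n})$ sending $Z_i\mapsto z_i$. Let $J$ denote the ideal they generate. First I would verify $J\subseteq \mathcal I$. Setting $w_i:=z_i+\cdots+z_n=\sum_{|S|\ge i}x_S\in A^*(U_{n,n})$, one expands $z_a\,w_i^{b-a}$ over strict chains $A\subsetneq T_1\subsetneq\cdots\subsetneq T_l$ with $|A|=a$, $|T_1|\ge i$, and multiplicities $m_j\ge 1$ summing to $b-a$. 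The hockey-stick identity $\sum_{i=b}^{|T_1|}\binom{i-a-1}{b-a-1}=\binom{|T_1|-a}{b-a}$ collapses the outer sum over $i$, and interpreting $\binom{|T_1|-a}{b-a}$ as the number of $b$-subsets $B$ with $A\subsetneq B\subseteq T_1$ rewrites the identity to prove as
\[
\sum_{\substack{A\subsetneq B\\ |A|=a,\,|B|=b}} x_A\, v_B^{b-a}=0, \qquad v_B:=\sum_{T\supseteq B} x_T.
\]
The base case $b=a+1$ is the relation $z_a\sum_{k>a}(k-a)z_k=0$, which I would derive by multiplying $\sum_{e\notin A}\sum_{S\ni e}x_S=0$ by $x_A$ (so the terms with $S\subseteq A$ drop out because $|S\cap A^c|=0$) and then symmetrizing over $|A|=a$. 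The general case follows by induction on $b-a$: multiply the base identity $x_A u_A=0$ (with $u_A:=\sum_{e\notin A}v_{A\cup\{e\}}$) by auxiliary chain products, and reindex to convert the factors $\prod_j(|T_j|-a)^{m_j}$ produced by naive iteration into the single factor $\binom{|T_1|-a}{b-a}$ required by \eqref{eq:GB}.

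Next I would compute leading monomials. Under any monomial order with $Z_1\succ Z_2\succ\cdots\succ Z_n$, the leading monomial of $(Z_i+\cdots+Z_n)^{b-a}$ is $Z_i^{b-a}$, so among the summands $\binom{i-a-1}{b-a-1}Z_a(Z_i+\cdots+Z_n)^{b-a}$ the contribution at $i=b$ dominates and has leading monomial $Z_aZ_b^{b-a}$ with coefficient $\binom{b-a-1}{b-a-1}=1$; when $a=0$ this reads $Z_b^b$ under the convention $Z_0=1$. A monomial $Z_{s_1}^{p_1}\cdots Z_{s_l}^{p_l}$ with $s_1<\cdots<s_l$ and $p_i\ge 1$ is divisible by none of these leading monomials exactly when $p_i<s_i-s_{i-1}$ for every $i$ (setting $s_0:=0$): the pair $(a,b)=(s_{i-1},s_i)$ forces $p_i<s_i-s_{i-1}$, the pair $(0,s_1)$ gives the same bound when $i=1$, and non-adjacent pairs $(s_j,s_i)$ with $j<i-1$ give strictly weaker bounds. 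The standard monomials thus coincide precisely with the basis $\mathcal B_n$ of \cref{prop:basisRNU}.

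Finally, since every leading coefficient among the generators of $J$ is $1$, the $\ZZ$-division algorithm reduces any polynomial modulo $J$ to a $\ZZ$-linear combination of standard monomials, yielding a chain of $\ZZ$-linear surjections
\[
\ZZ\langle\mathcal B_n\rangle \twoheadrightarrow \ZZ[Z_1,\dots,Z_n]/J \twoheadrightarrow \ZZ[Z_1,\dots,Z_n]/\mathcal I \cong R^*(U_{r,n}),
\]
whose composite is a $\ZZ$-module isomorphism by \cref{prop:basisRNU}. Both surjections are therefore isomorphisms, so $J=\mathcal I$, and the coincidence of standard monomials with a $\ZZ$-basis of the quotient certifies that \eqref{eq:GB} is a Gr\"obner basis. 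The main obstacle is the first step: the leading-term analysis and the division argument are routine, but showing the weighted sums vanish in $A^*(U_{n,n})$ requires carefully threading the linear relations $\sum_{S\ni e}x_S=0$ through chain-indexed expansions, with the hockey-stick identity playing the crucial role of converting the natural factors $\prod_j(|T_j|-a)^{m_j}$ into the binomial coefficients $\binom{|T_1|-a}{b-a}$ dictated by \eqref{eq:GB}.
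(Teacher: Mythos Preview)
Your overall strategy---show the polynomials lie in $\mathcal I$, compute their leading terms, and check that the resulting standard monomials are exactly $\mathcal B_n$---is precisely the paper's. Your leading-term analysis and the chain of surjections at the end are correct and match the paper's argument essentially verbatim.

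The difference is in the first step, and there you have a gap. You correctly reduce the vanishing of \eqref{eq:GB} (via the hockey-stick identity, which is the paper's telescoping $\binom{t_1-a}{b-a}-\binom{t_1-1-a}{b-a}=\binom{t_1-1-a}{b-a-1}$ read in reverse) to the identity
\[
\sum_{\substack{A\subsetneq B\\|A|=a,\ |B|=b}} x_A\,v_B^{\,b-a}=0,\qquad v_B=\sum_{T\supseteq B}x_T.
\]
At this point the paper simply cites \cite[Theorem~1]{feichtner2004chow}: each individual summand $x_A\,v_B^{\,|B|-|A|}$ is already zero in $A^*(U_{n,n})$---these are exactly the Feichtner--Yuzvinsky relations for the Boolean lattice. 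No induction is needed; the sum vanishes term by term.

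You instead propose an induction on $b-a$, but the sketch does not go through as written. From $x_Au_A=0$ you do get $x_Au_A^{\,b-a}=0$ for free, but expanding that produces chain-sums weighted by $\prod_j(|T_j|-a)^{m_j}$, which depend on \emph{all} the $T_j$, whereas the target identity has the single weight $\binom{|T_1|-a}{b-a}$ depending only on $T_1$. These are genuinely different expressions, and no combination of hockey-stick identities converts one family into the other; your ``reindex to convert'' step is the missing idea. (Your base case $b=a+1$ is fine, and in fact what you prove there is precisely the Feichtner--Yuzvinsky relation $x_A v_{A\cup\{e\}}=0$ for each $e\notin A$---which already hints that the right move is to invoke the general FY relation rather than to induct.)
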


\begin{proof}
We first show that the elements $z_i \in R^*(U_{r,n})$ satisfy the relations in \cref{eq:GB}. Following \cite[Theorem 1]{feichtner2004chow}, the generators of the Chow ring $A^*(U_{n,n})$ satisfy the relations
\begin{equation*}
0 = x_S \cdot \left(\sum_{T' \supseteq T} x_{T'}\right)^{|T|\setminus |S|} \qquad \text{for $\emptyset \subseteq S \subsetneq T \subseteq [n]$,}
\end{equation*}
with the convention that $x_\emptyset = 1$. Since $x_{T_1}x_{T_2} = 0$ whenever $T_1 \nsubseteq T_2$ and $T_2 \nsubseteq T_1$, using the multinomial theorem we can expand this as
\begin{equation*}
0 = x_S \cdot \sum \binom{|T|\setminus |S|}{d_1, d_2, \dots, d_k} \, x_{T_1}^{d_1} \dotsb x_{T_k}^{d_k}
\end{equation*}
where the sum is over all tuples $(T_1, \dots, T_k)$ of subsets of $[n]$ such that $T \subseteq T_1 \subsetneq T_2 \subsetneq \dots \subsetneq T_k$ and all sequences of positive integers $(d_1, \dots, d_k)$ with $d_1+ \dots + d_k = |T|\setminus |S|$. 

Now, fix $0\leq a < b \leq n$. Adding the relations above over all pairs $S \subsetneq T$ satisfying $|S|=a$ and $|T| = b$, we obtain
\begin{align*}
0 &= \sum_{\substack{S \subsetneq T\\|S|=a, \,\, |T| = b}} \left( x_S \cdot \sum_{\substack{T \subseteq T_1 \subsetneq T_2 \subsetneq \dots \subsetneq T_k\\d_1+ \dots + d_k = b-a}} \binom{b-a}{d_1, d_2, \dots, d_k} \, x_{T_1}^{d_1} \dotsb x_{T_k}^{d_k} \right).
\end{align*}
In this double summation, a term $\binom{b-a}{d_1, d_2, \dots, d_k} \, x_S \, x_{T_1}^{d_1} \dotsb x_{T_k}^{d_k}$ appears as many times as there are subsets $T$ satisfying $S \subsetneq T \subseteq T_1$, which is $\binom{|T_1|-a}{b-a}$ times. 
We can thus write the previous equation as
\begin{align*}
0 &= \sum_{\substack{S \subsetneq T_1 \subsetneq T_2 \subsetneq \dots \subsetneq T_k\\|S| = a,\,\, |T_1|\geq b,\,\, d_1+ \dots + d_k = b-a}} \binom{|T_1|-a}{b-a} \binom{b-a}{d_1, d_2, \dots, d_k} \, x_S \, x_{T_1}^{d_1} \dotsb x_{T_k}^{d_k}.
\end{align*}
Since variables corresponding to incomparable subsets have product equal to zero in the Chow ring $A^*(U_{n,n})$, we can further rewrite this as 
\begin{align*}
0 &= \sum_{\substack{b \leq t_1 < t_2 < \dots < t_k \leq n\\d_1+ \dots + d_k = b-a}} \binom{t_1-a}{b-a} \binom{b-a}{d_1, d_2, \dots, d_k} \left( \sum_{|S|=a} x_S \right) \left(\sum_{|T_1| = t_1} x_{T_1}^{d_1}\right) \dots \left(\sum_{|T_k| = t_k} x_{T_k}^{d_k}\right)\\
&= \sum_{\substack{b \leq t_1 < t_2 < \dots < t_k \leq n\\d_1+ \dots + d_k = b-a}} \binom{t_1-a}{b-a} \binom{b-a}{d_1, d_2, \dots, d_k} z_a \, z_{t_1}^{d_1} \dots z_{t_k}^{d_k}.
\end{align*}
Grouping together terms with the same value of $t_1$ and $d_1$, and then using the identity $\binom{b-a}{d_1, d_2, \dots, d_k} = \binom{b-a}{d_1} \binom{b-a-d_1}{d_2, \dots, d_k}$, we get
\begin{align*}
0 &= \sum_{\substack{t_1 \geq b\\1 \leq d_1 \leq b-a}} \left( \binom{t_1-a}{b-a} z_a \, z_{t_1}^{d_1} \cdot \sum_{\substack{t_1+1 \leq t_2 < \dots < t_k \leq n\\d_2 + \dots + d_k = b-a-d_1}} \binom{b-a}{d_1, d_2, \dots, d_k} z_{t_2}^{d_2} \dots z_{t_k}^{d_k} \right)\\
&= z_a \cdot \sum_{\substack{t_1 \geq b\\1 \leq d_1 \leq b-a}} \left( \binom{t_1-a}{b-a} \binom{b-a}{d_1} z_{t_1}^{d_1} \cdot \sum_{\substack{t_1+1 \leq t_2 < \dots < t_k \leq n\\d_2 + \dots + d_k = b-a-d_1}} \binom{b-a-d_1}{d_2, \dots, d_k} z_{t_2}^{d_2} \dots z_{t_k}^{d_k} \right)\\
&= z_a \cdot \sum_{\substack{t_1 \geq b\\1 \leq d_1 \leq b-a}} \binom{t_1-a}{b-a} \binom{b-a}{d_1} z_{t_1}^{d_1} \cdot (z_{t_1+1} + z_{t_1+2} + \dots + z_n)^{b-a-d_1}.
\end{align*}
This can be further rewritten as
\begin{align*}
0 &= z_a \cdot \sum_{t_1 \geq b} \binom{t_1-a}{b-a} \cdot \left( \sum_{d_1=1}^{b-a} \binom{b-a}{d_1} z_{t_1}^{d_1} \cdot (z_{t_1+1} + z_{t_1+2} + \dots + z_n)^{b-a-d_1} \right)\\
 &= z_a \cdot \sum_{t_1 \geq b} \binom{t_1-a}{b-a} \cdot \left((z_{t_1} + z_{t_1+1} + z_{t_1+2} + \dots + z_n)^{b-a}- (z_{t_1+1} + z_{t_1+2} + \dots + z_n)^{b-a} \right).
\end{align*}
Finally, reordering terms yields
\begin{align*}
0 &= z_a \cdot \sum_{t_1 \geq b} \left( \binom{t_1-a}{b-a} - \binom{t_1-1-a}{b-a} \right)(z_{t_1} + z_{t_1+1} + z_{t_1+2} + \dots + z_n)^{b-a}\\
&= z_a \cdot \sum_{t_1 \geq b} \binom{t_1-1-a}{b-a-1} (z_{t_1} + z_{t_1+1} + z_{t_1+2} + \dots + z_n)^{b-a},
\end{align*}
which is the desired relation.

To show that these relations form a Gr\"obner basis, we note that the leading term of the polynomial in \cref{eq:GB} is $Z_a Z_b^{b-a}$. The ideal generated by these monomials, with $0 \leq a < b \leq n$, consists precisely of the monomials not in the basis $\mathcal B_n$ described in \cref{prop:basisRNU}, proving the statement.
\end{proof}

\begin{question}
It would be interesting to classify the matroids $M$ for which the containment $R^*(M) \subseteq A^*(U_{n,n})^{\Aut(M)}$ is an equality. As stated in \cref{prop: RNU}, this class includes all uniform matroids. 
Considering elements of degree $1$ shows that this equality implies that $M$ satisfies:
\begin{enumerate}
    \item[{\crtcrossreflabel{(*)}[item:transitivity]}] For any $0\leq i,j\leq n$, the automorphism group $\Aut(M)$ acts transitively on the collection of sets $\mathcal S_{i,j} := \{ S \subseteq [n] : \rank(S) = i \text{ and } \nulll(S) = j\}$.
\end{enumerate}
In particular, taking $i = \rank(M)$ and $j=0$, this means that $\Aut(M)$ acts transitively on the set of bases of $M$. Similarly, taking $i = \rank(M)$ and $j=1$ we see that $\Aut(M)$ acts transitively on the set of circuits of $M$, since subsets of rank $\rank(M)$ and nullity $1$ contain exactly one circuit, and all circuits are contained in such a subset.
However, it is unclear to us exactly what matroids satisfy the very restrictive property \ref{item:transitivity}. 
\end{question}

\subsection{The Hilbert function of the rank-nullity ring}
\label{subsec:H}

We now present simple expressions for the Hilbert series of the rank-nullity ring of the uniform matroid, and provide examples of the general behavior for other matroids.

\begin{prop}
\label[prop]{prop:HfunctionU}
    The Hilbert function of the rank-nullity ring $R^*(U_{r,n})= A^*(U_{n,n})^{S_n}$ of the uniform matroids is given by
    $${\rm HF}(d) := {\rm rank}_\ZZ(R^d(U_{r,n}))= \binom{n-1}{d}.$$
    Consequently, its Hilbert series is equal to 
    $${\rm HS}(t) := \sum_{d\geq0} {\rm HF}(d)\,t^d = (1+t)^{n-1}.$$
\end{prop}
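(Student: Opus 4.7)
\textbf{Proof plan for \cref{prop:HfunctionU}.}

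The strategy is to count the basis $\mathcal{B}_n$ of $R^*(U_{r,n})$ provided by \cref{prop:basisRNU} in a degree-preserving way. Recall that an element of $\mathcal{B}_n$ is a monomial $z_{s_1}^{p_1}z_{s_2}^{p_2}\cdots z_{s_l}^{p_l}$ with $1 \leq s_1 < s_2 < \dots < s_l \leq n$ and $1 \leq p_i < s_i - s_{i-1}$ (where $s_0 = 0$), and that each $z_{s_i}$ lives in degree $1$. Thus ${\rm HF}(d)$ is exactly the number of such tuples with $p_1 + \dots + p_l = d$.

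The plan is to exhibit an explicit bijection
$$\Phi\colon \{ \text{elements of } \mathcal{B}_n \text{ of degree } d\} \longrightarrow \binom{[n-1]}{d},$$
defined by sending $z_{s_1}^{p_1}\cdots z_{s_l}^{p_l}$ to the disjoint union of intervals
$$\Phi(z_{s_1}^{p_1}\cdots z_{s_l}^{p_l}) \;:=\; \bigsqcup_{i=1}^{l} [\,s_i - p_i,\, s_i - 1\,] \;\;\subseteq\;\; [n-1].$$
The condition $p_i < s_i - s_{i-1}$ guarantees $s_i - p_i > s_{i-1}$, so the intervals are pairwise disjoint and separated by at least one gap; the condition $s_l \leq n$ guarantees that the largest element $s_l - 1$ lies in $[n-1]$; and the total cardinality of the image is $p_1 + \dots + p_l = d$.

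For the inverse, given $T \in \binom{[n-1]}{d}$, I would decompose $T$ into its maximal runs of consecutive integers, say $[a_1,b_1] \sqcup \dots \sqcup [a_l,b_l]$ with $b_i + 1 < a_{i+1}$, and set $s_i := b_i + 1$ and $p_i := b_i - a_i + 1$. A quick check shows $1 \leq s_1 < \dots < s_l \leq n$ and $p_i < s_i - s_{i-1}$, so this reverses $\Phi$. Hence ${\rm HF}(d) = \binom{n-1}{d}$, and the Hilbert series follows immediately from the binomial theorem:
$${\rm HS}(t) = \sum_{d \geq 0}\binom{n-1}{d} t^d = (1+t)^{n-1}.$$

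The entire argument is combinatorial and relies on \cref{prop:basisRNU} having already produced a $\ZZ$-basis; the only delicate point is checking that the separation condition $p_i < s_i - s_{i-1}$ on basis elements corresponds exactly to the ``maximal run'' condition on subsets of $[n-1]$. An alternative route, if one prefers not to write down $\Phi$ explicitly, is to parametrize each basis element by the compositions $(p_1,\dots,p_l)$ of $d$ and $(r_1,\dots,r_l)$ of some $R \leq n - d$ (where $r_i := s_i - s_{i-1} - p_i \geq 1$), and then collapse the resulting sum $\sum_{l,R} \binom{d-1}{l-1}\binom{R-1}{l-1}$ using the hockey-stick and Vandermonde identities to $\binom{n-1}{d}$; but the bijective proof above is cleaner and explains the binomial coefficient conceptually.
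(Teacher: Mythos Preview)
Your proposal is correct and is essentially identical to the paper's own proof: both count the degree-$d$ part of the basis $\mathcal{B}_n$ from \cref{prop:basisRNU} via the very same bijection with $\binom{[n-1]}{d}$, sending a monomial $z_{s_1}^{p_1}\cdots z_{s_l}^{p_l}$ to the union of intervals $\{s_i-p_i,\dots,s_i-1\}$ and recovering the monomial from the maximal-run decomposition of a subset. The only cosmetic difference is that the paper defines the bijection in the direction $\binom{[n-1]}{d}\to\mathcal{B}_n^d$ first, whereas you start from $\mathcal{B}_n^d$.
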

\begin{proof}
By \cref{prop:basisRNU}, the following set of monomials is a $\ZZ$-basis for the degree-$d$ part of $R^*(U_{r,n})$:
$$\textstyle \mathcal{B}^d_n =\{z_{s_1}^{p_1}z_{s_2}^{p_2}\dots z_{s_l}^{p_l} : \sum p_i=d \text{ and } 1 \leq p_i < s_i-s_{i-1}\}.$$
We show that there are exactly $\binom{n-1}{d}$ monomials in $\mathcal B^d_n$ by describing a concrete bijection with the set $\binom{[n-1]}{d} = \{S\subseteq [n-1] : |S| = d\}$.
Any subset $S \in \binom{[n-1]}{d}$ naturally decomposes as a disjoint union $S = I_1 \sqcup I_2 \sqcup \dots \sqcup I_l$ of intervals of consecutive numbers; more precisely, the subsets $I_j$ have the form $I_j = \{ a_j, a_j+1, \dots, b_j\}$ and $b_j +1 < a_{j+1}$. 
The map $f:\binom{[n-1]}{d} \rightarrow \mathcal{B}^d_n$ sending the subset $S$ to the monomial $z_{s_1}^{p_1}z_{s_2}^{p_2}\cdots z_{s_l}^{p_l}$ where $s_j = b_j + 1$ and $p_j = |I_j|$ is a bijection.
Indeed, the inverse map $f^{-1}: \mathcal{B}^d_n \rightarrow \binom{[n-1]}{d}$ sends a monomial
$z_{s_1}^{p_1}z_{s_2}^{p_2}\cdots z_{s_l}^{p_l}$ to the subset $S = I_1 \sqcup I_2 \sqcup \dots \sqcup I_l$ 
where $I_j = \{s_j-p_j, s_j - p_j +1, \dots, s_j - 1\}$.  
\end{proof}

\begin{example}\label[example]{ex:hilbertfunctions}
Let $n=5$. The bijection $f$ described in the proof of \cref{prop:HfunctionU} between subsets of $\{1,2,3,4\}$ and monomials in the basis $\mathcal B_5$ is (ignoring brackets and commas in our notation for subsets):
\begin{equation*}
    \begin{array}{ccc}
    \emptyset & \mapsto & 1
    \end{array}
    \quad
    \begin{array}{ccc}
        1 & \mapsto & z_2 \\
        2 & \mapsto & z_3 \\ 
        3 & \mapsto & z_4 \\
        4 & \mapsto & z_5
    \end{array}
    \quad
    \begin{array}{ccc}
    12 &\mapsto & z_3^2 \\
    1 \,\,\, 3 &\mapsto & z_2z_4 \\
    1 \,\,\, 4 &\mapsto & z_2z_5 \\
    23 &\mapsto & z_4^2 \\
    2 \,\,\, 4 &\mapsto & z_3z_5 \\
    34 &\mapsto & z_5^2
    \end{array}
    \quad
    \begin{array}{ccc}
    123 &\mapsto & z_4^3 \\
    12 \,\,\, 4 &\mapsto & z_3^2z_5 \\ 
    1 \,\,\, 34 &\mapsto & z_2z_5^2 \\   
    234 &\mapsto & z_5^3  
    \end{array}
    \quad
    \begin{array}{ccc}
    1234 &\mapsto & z_5^4   
    \end{array}.
\end{equation*}  
\end{example}

In the following example we see that, for general matroids, the Hilbert function of the rank-nullity ring is not necessarily symmetric.  

\begin{example}
\cref{tableH} shows the Hilbert function of the rank-nullity ring $R^*(M)$ of some matroids $M$, computed using Macaulay2.
These are displayed as the vector ${\rm HF}_{R^*(M)} := ({\rm rank}_\ZZ ({R^0(M)}), \dots, {\rm rank}_\ZZ ({R^{n-1}(M)}))$.
Recall that the rank-nullity ring of the uniform matroids $R^*(U_{r,n})$ depends only on the size $n$ of the ground set, hence in this case we do not specify the rank.
Our list includes all non-isomorphic matroids of size at most 4, but only some of the matroids of higher ranks.
For reference, we also list the Hilbert function ${\rm HF}_{A^*(U_{n,n})}$ of the Chow ring $A^*(U_{n,n})$ of the free matroid on the same ground set, of which these rings are subrings. 

In the table we use the following notation: 
\begin{itemize}
    \item $M_1$ is the rank-2 matroid on 4 elements with only one non-basis. 
    \item $M_2$ is the rank-3 matroid on 5 elements with two non-bases $\{1,2,3\}, \{3,4,5\}$. This is a graphic matroid, associated to the graph $K_4$ minus an edge.
    \item $M_3$ is the rank-2 matroid on 6 elements with three non-bases $\{1,2\},\{3,4\},\{5,6\}$. This is the graphic matroid associated to a triangle graph with double edges.
    \item $M(K_4)$ is the rank-3 graphic matroid on 6 elements associated to the graph $K_4$.
    \item $M_4$ is the rank-4 matroid on 6 elements obtained by taking $U_{4,5}$ an replacing an element by 2 parallel elements. This is the graphic matroid associated to a pentagon graph where one of the sides is a double edge. 
    \item $F_7^{-}$ is the non-Fano matroid.\qedhere
\end{itemize}
\end{example}

    \begin{table}[htp]
        \begin{tabular}{c|c|c}
             \multirow{2}{*}{$M$} & \multirow{2}{*}{${\rm HF}_{R^*(M)}$} & \multirow{2}{*}{${\rm HF}_{A^*(U_{n,n})}$} \\ 
             & & \\
             \hline \hline 
             $U_{r,1}$ & $(1)$ & \multirow{1}{*}{$(1)$}\\ 
             \hline
             $U_{r,2}$ & $(1,1)$ & \multirow{2}{*}{$(1,1)$}\\
             $U_{0,1}\oplus U_{1,1}$ & $(1,1)$ & \\
             \hline
             $U_{r,3}$ & $(1,2,1)$ & \multirow{5}{*}{$(1,4,1)$}\\ 
             $U_{0,1}\oplus U_{1,2}$ & $(1,3,1)$ & \\
             $U_{0,1}\oplus U_{0,1}\oplus U_{1,1}$ & $(1,3,1)$ & \\
             $U_{0,1}\oplus U_{1,1}\oplus U_{1,1}$ & $(1,3,1)$ & \\
             $U_{1,1}\oplus U_{1,2}$ & $(1,3,1)$ & \\
             \hline
             $U_{r,4}$ & $(1,3,3,1)$ & \multirow{13}{*}{$(1,11,11,1)$}\\ 
             $U_{0,1}\oplus U_{1,3}$ & $(1,4,5,1)$ &\\ 
             $U_{0,1}\oplus U_{0,1} \oplus U_{1,2}$ & $(1,5,6,1)$ &\\ 
             $U_{0,1}\oplus U_{0,1} \oplus U_{0,1} \oplus U_{1,1}$ & $(1,5,5,1)$ &\\ 
             $M_1$ & $(1,4,5,1)$ &\\ 
             $U_{1,2}\oplus U_{1,2}$ & $(1,4,4,1)$ & \\
             $U_{0,1}\oplus U_{2,3}$ & $(1,5,5,1)$ & \\
             $U_{1,1}\oplus U_{1,3}$ & $(1,5,5,1)$ & \\
              $U_{0,1}\oplus U_{1,1}\oplus U_{1,2}$ & $(1,6,8,1)$ & \\
               $U_{0,1}\oplus U_{0,1}\oplus U_{1,1}\oplus U_{1,1}$ & $(1,6,6,1)$ & \\
            $U_{1,1}\oplus U_{2,3}$ & $(1,4,5,1)$ & \\
             $U_{1,1}\oplus U_{1,1}\oplus U_{1,2}$ & $(1,5,6,1)$ & \\
            $U_{0,1}\oplus U_{1,1}\oplus U_{1,1}\oplus U_{1,1}$ & $(1,5,5,1)$ & \\
             \hline
             $U_{r,5}$& $(1,4,6,4,1)$ & \multirow{8}{*}{$(1,26,66,26,1)$}\\ 
              $U_{0,1}\oplus U_{0,1}\oplus U_{1,3}$ & $(1,6,12,9,1)$ & \\
 $U_{0,1}\oplus U_{0,1}\oplus U_{0,1}\oplus U_{1,2}$ &$(1,7,14,9,1)$ & \\ 
  $U_{0,1}\oplus U_{0,1}\oplus U_{0,1}\oplus U_{0,1}\oplus U_{1,1}$ &$(1,7,12,7,1)$ & \\ 
              $M_2$ & $(1,5,9,7,1)$ &\\
    $U_{0,1}\oplus U_{2,4}$ &$(1, 6, 11, 7, 1)$ & \\
            $U_{0,1}\oplus U_{1,1} \oplus U_{2,3}$ & $(1,8,18,12,1)$ \\
             $U_{1,1}\oplus U_{3,4}$ & $(1,5,9,7,1)$ & \\
             \hline
             $U_{r,6} $& $(1,5,10,10,5,1)$ & \multirow{8}{*}{$(1,57,302,302,57,1)$}\\ 
             $U_{0,1}\oplus U_{0,1}\oplus U_{0,1}\oplus U_{0,1}\oplus U_{1,2}$ & $(1,9,25,28,12)$ & \\
             $M_3$ & $(1,6,14,16,9,1)$ & \\
             $M(K_4)$ & $(1,6,14,16,8,1)$ & \\
             $U_{0,1} \oplus U_{1,1} \oplus U_{2,4}$ & $(1,10,32,39,16,1)$ & \\
             $M_4$ & $(1, 8, 22, 25, 11, 1)$ & \\
             $U_{2,3}\oplus U_{2,3}$ & $(1,7,18, 20,8,1)$ & \\
             $U_{1,1}\oplus U_{4,5}$ & $(1,6,14,16,9,1)$ & \\
             \hline 
             $F_7^{-}$ & $(1,7,20,30,25,11,1)$ & \multirow{1}{*}{$(1,120,1191,2416,1191,120,1)$} \\
        \end{tabular}
        \caption{Hilbert functions of some rank-nullity rings}
        \label{tableH}
    \end{table}

As we saw in the previous examples, the top non-zero degree of the rank-nullity ring of a matroid is always equal to that of the Chow ring $A^*(U_{n,n})$.

\begin{prop}
    The rank-nullity ring $R^*(M)$ of a matroid $M$ on the ground set $[n]$ has highest non-zero degree equal to $n-1$. In other words,
    $${\rm rank}_\ZZ ({R^{n-1}(M)}) = 1.$$
\end{prop}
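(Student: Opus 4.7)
The plan is to exploit the fact that the top degree is essentially forced on us, so that only a witness is needed. Since $R^*(M)$ sits inside the Chow ring $A^*(U_{n,n})$, and $U_{n,n}$ is a matroid of rank $n$, Poincaré duality gives $A^{n-1}(U_{n,n}) \cong \ZZ$ and $A^k(U_{n,n}) = 0$ for all $k \geq n$. Therefore $R^{n-1}(M)$ is a subgroup of a rank-$1$ free abelian group, so its rank is at most $1$, and the whole task reduces to producing one nonzero element in $R^{n-1}(M)$.

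The natural candidate is $y_{r, n-r}^{n-1}$, where $r = \rank(M)$. The key observation is that the unique subset $S \subseteq [n]$ with $\rank(S) = r$ and $\nulll(S) = n - r$ is $S = [n]$ itself, because the nullity condition forces $|S| = \rank(S) + \nulll(S) = n$. Hence $y_{r, n-r} = x_{[n]}$, and the problem reduces to showing that $x_{[n]}^{n-1} \neq 0$ in $A^{n-1}(U_{n,n})$.

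For this I would invoke the Feichtner-Yuzvinsky $\ZZ$-basis $\mathcal{FY}_n$ of $A^*(U_{n,n})$, which was already used in the proof of \cref{prop:basisRNU}: it consists of the monomials $x_{S_1}^{p_1} \cdots x_{S_l}^{p_l}$ indexed by chains $\emptyset \subsetneq S_1 \subsetneq \cdots \subsetneq S_l \subseteq [n]$ together with exponents $p_i < |S_i| - |S_{i-1}|$ (with $S_0 = \emptyset$). Taking $l = 1$, $S_1 = [n]$, and $p_1 = n-1$, the required inequality $n - 1 < n - 0$ holds, so $x_{[n]}^{n-1}$ is a basis element of $A^*(U_{n,n})$ and is therefore nonzero.

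Putting these pieces together, $y_{r, n-r}^{n-1} \in R^{n-1}(M)$ is a nonzero element of a group of rank at most one, so $\mathrm{rank}_\ZZ(R^{n-1}(M)) = 1$ as claimed. There is no real obstacle in this argument; the only point worth highlighting is picking the right witness, and the full ground set is essentially forced as soon as one asks for a single subset of rank $r$ and maximal nullity $n - r$.
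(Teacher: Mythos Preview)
Your proof is correct, but it uses a different witness than the paper. The paper takes
\[
f = \prod_{k=1}^{n-1}\Bigl(\sum_{i+j=k} y_{i,j}\Bigr) = \prod_{k=1}^{n-1}\Bigl(\sum_{|S|=k} x_S\Bigr)
\]
and observes that after expanding, $f$ is the sum over all maximal chains of $x_{S_1}\cdots x_{S_{n-1}}$, each of which equals $1$ under the degree isomorphism; hence $f = n! \neq 0$. Your choice $y_{r,n-r}^{\,n-1} = x_{[n]}^{\,n-1}$ is more direct: recognizing that the only set of rank $r$ and nullity $n-r$ is $[n]$ itself, you land on a single monomial that is literally an element of the Feichtner--Yuzvinsky basis, so nonvanishing is immediate. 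A small bonus of your route is that $x_{[n]}^{\,n-1}$ is in fact the unique $\mathcal{FY}_n$ basis element in degree $n-1$, so your argument actually shows $R^{n-1}(M) = A^{n-1}(U_{n,n})$ as groups, not merely that it has rank $1$; the paper's witness only exhibits the element $n!$ inside $\ZZ$.
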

\begin{proof}
    Since $R^{n-1}(M) \subseteq A^{n-1}(U_{n,n}) \cong \ZZ$, it is enough to show that ${R^{n-1}(M)}\neq \{0\}$. For this, consider, for instance, the element 
    $$f := \prod_{k=1}^{n-1} \left( \sum_{i+j = k} y_{i,j} \right) = \prod_{k=1}^{n-1} \left( \sum_{|S| = k} x_S \right) \quad \in R^{n-1}(M).$$ 
    Expanding out the product on the right hand side gives an expression for $f$ as the sum of all monomials $x_{S_1}x_{S_2}\dots x_{S_{n-1}}$ where $\emptyset \subsetneq S_1 \subsetneq S_2 \subsetneq \dotsc \subsetneq S_{n-1} \subsetneq [n]$ is a maximal chain of subsets of $[n]$. Each of these monomials is equal to $1$ under the isomorphism $A^{n-1}(U_{n,n}) \cong \ZZ$ \cite[Proposition 5.8]{adiprasito2018hodge}, and thus in particular, $f$ is nonzero (in fact, it is equal to the number of maximal chains if subsets of $[n]$, which is $n!$).
\end{proof}

We note that the Hilbert functions of the rank-nullity rings displayed in \cref{tableH} are all unimodal, and even log-concave. The fact that these Hilbert functions increase up to its middle value follows from the ``K\"ahler package'' satisfied by the Chow ring $A^*(U_{n,n})$, as we now explain. 

Following \cite[Section 1.2]{adiprasito2018hodge}, we call a function $f:2^{E}\rightarrow\mathbb{R}$ \emph{strictly submodular} if 
$$f(S_1)+f(S_2)> f(S_1 \cup S_2) + f(S_1 \cap S_2)$$ 
for every pair of incomparable subsets $S_1, S_2 \subseteq E$, with the convention that $f(\emptyset)=f(E)=0$. 
An example of such a function is
\begin{equation}
\label{eq: Lef}
    f(S)=|S|\cdot |E\setminus S|.
\end{equation}
    
\begin{prop}
\label[prop]{prop:Hincresing}
    The Hilbert function of the rank-nullity ring $R^*(M)$ of a matroid $M$ on the ground set $[n]$ satisfies
    $${\rm rank}_\ZZ ({R^0(M)}) \leq {\rm rank}_\ZZ ({R^1(M)}) \leq \dots \leq {\rm rank}_\ZZ ({R^{\floor{\frac{n}{2}}}(M)}).$$
\end{prop}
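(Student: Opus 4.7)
The plan is to exhibit a Lefschetz element for $A^*(U_{n,n})$ that happens to live in the subring $R^*(M)$, and then transfer injectivity of multiplication by this element from $A^*(U_{n,n})$ to $R^*(M)$. The input I intend to invoke is the Kähler package of Adiprasito--Huh--Katz \cite{adiprasito2018hodge} applied to the free matroid $U_{n,n}$: for any strictly submodular function $f:2^{[n]}\to\RR$, the class
$$\ell_f := \sum_{\emptyset\subsetneq S\subsetneq [n]} f(S)\,x_S \;\in\; A^1(U_{n,n})_{\RR}$$
satisfies Hard Lefschetz on $A^*(U_{n,n})_{\RR}$. Since $A^*(U_{n,n})$ has top degree $n-1$, the weak Lefschetz consequence gives that multiplication by $\ell_f$ is an injection $A^{d-1}(U_{n,n})_{\RR}\hookrightarrow A^{d}(U_{n,n})_{\RR}$ for every $1\leq d\leq \lfloor n/2\rfloor$.

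The crucial observation is that we are free to pick the strictly submodular function $f(S)=|S|\cdot|[n]\setminus S|$ from \cref{eq: Lef}. Since $|S|=\rank(S)+\nulll(S)$, the value $f(S)$ depends only on the pair $(\rank(S),\nulll(S))$, so we may regroup the defining sum of $\ell_f$ according to rank--nullity type and obtain
$$\ell_f \;=\; \sum_{0<i+j<n}(i+j)(n-i-j)\,y_{i,j} \;\in\; R^1(M).$$
Because $R^*(M)$ is a subring of $A^*(U_{n,n})$ and $\ell_f\in R^1(M)$, multiplication by $\ell_f$ restricts to a map $R^{d-1}(M)\to R^{d}(M)$.

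To conclude, I would extend scalars to $\RR$. The ring $A^*(U_{n,n})$ is free of finite rank over $\ZZ$ (e.g.\ by the Feichtner--Yuzvinsky basis used in the proof of \cref{prop:basisRNU}), so the subgroup $R^d(M)$ is itself a free $\ZZ$-module of rank equal to $\dim_\RR R^d(M)_\RR$, and the inclusion $R^d(M)_\RR\hookrightarrow A^d(U_{n,n})_\RR$ is injective. The restriction of the injective map ``multiplication by $\ell_f$'' to the subspace $R^{d-1}(M)_\RR$ is therefore injective into $R^d(M)_\RR$, giving
$${\rm rank}_\ZZ R^{d-1}(M) \;\leq\; {\rm rank}_\ZZ R^{d}(M) \qquad \text{for every } 1\leq d\leq \lfloor n/2\rfloor,$$
and chaining these inequalities yields the claim. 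The only substantive input is the Hard Lefschetz property for strictly submodular classes on $A^*(U_{n,n})$, which I expect to be the main obstacle purely in the sense of being the nontrivial step to cite; the rest of the argument is formal once one notices that the specific submodular function $|S|(n-|S|)$ is invariant under permutations of $[n]$ and hence lands in the rank-nullity subring.
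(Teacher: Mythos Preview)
Your proposal is correct and essentially identical to the paper's own proof: both pick the strictly submodular function $f(S)=|S|(n-|S|)$, observe that it is constant on rank--nullity classes so that the associated Lefschetz element $\ell=\sum_{i,j}(i+j)(n-i-j)\,y_{i,j}$ lies in $R^1(M)$, and then restrict the injectivity of multiplication by $\ell$ (from the K\"ahler package for $A^*(U_{n,n})$) to the subring $R^*(M)$. Your extra care about the passage between $\ZZ$-ranks and $\RR$-dimensions is a detail the paper leaves implicit.
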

\begin{proof}
  Any subset $S \subseteq [n]$ of rank $i$ and nullity $j$ must have size $i+j$. 
  This implies that the strictly submodular function $f$ in \cref{eq: Lef} is constant on subsets of the same rank and nullity. 
  As in \cite[Theorem 1.4]{adiprasito2018hodge}, the element 
  $$\ell = \sum_{\emptyset \subsetneq S \subsetneq [n]} f(S) \, x_S = \sum_{0\leq i,j \leq n} (i+j)(n-i-j) \, y_{i,j} \in R^*(M)$$ is an element of $A^1(U_{n,n})$ defining an ample class on the permutahedral toric variety, which in particular implies that, for any $q \leq (n-1)/2$, the multiplication map $L^q_{\ell} : A^q(U_{n,n}) \to A^{q+1}(U_{n,n})$ sending $a \mapsto \ell \cdot a$ is injective. Since $\ell \in R^1(M)$, this multiplication map restricts to an injection from $R^q(M)$ into $R^{q+1}(M)$, which implies the desired result.
\end{proof}

\begin{qn}
\label[qn]{qn:1}
Is the Hilbert function of the rank-nullity ring of a matroid $M$ always unimodal? Is it log-concave?
\end{qn}

\bibliographystyle{alpha}
\bibliography{biblio}

\end{document}